\let\@afterindenttrue\@afterindentfalse
\numberwithin{equation}{section}
\newtheorem{theorem}[equation]{Theorem}
\newtheorem{lemma}[equation]{Lemma}
\theoremstyle{definition}
\newtheorem{definition}[equation]{Definition}
\newtheorem{example}[equation]{Example}
\newtheorem{construction}[equation]{Construction}
\newtheorem{remark}[equation]{Remark}
\newcommand{\N}{\mathbb{N}}
\newcommand{\inr}{\mathsf{inr}}
\newcommand{\id}{\mathsf{id}}
\newcommand{\inl}{\mathsf{inl}}
\newcommand{\colim}{\mathrm{colim}}
\newcommand{\glue}{\mathsf{glue}}
\newcommand{\pprod}{\widehat \times}
\newcommand{\smsh}{\wedge}
\DeclareMathOperator{\fw}{FW}
\DeclareMathOperator*{\bigast}{\raisebox{-0.6ex}{\scalebox{2.5}{$\ast$}}}
\title{Path spaces of pushouts}
\author{David Wärn}
\begin{document}
\begin{abstract}
Given a span of spaces, one can form the homotopy pushout and then take the
homotopy pullback of the resulting cospan. 
We give a concrete description of this pullback as the colimit of a sequence of approximations,
using what we call \emph{the zigzag construction}.
We also obtain a description of loop spaces of homotopy pushouts.
Using the zigzag construction, we reproduce generalisations of the Blakers--Massey
theorem and fundamental results from Bass--Serre theory.
We also describe the loop space of a wedge and show that it splits after suspension.
Our construction can be interpreted in a large class of $\infty$-categories
and in homotopy type theory, where it resolves the long-standing open problem
of showing that a pushout of 0-types is 1-truncated.
The zigzag construction is closely related to the James construction, but
works in greater generality.
\end{abstract}
\maketitle

\section{Introduction}

Given a (homotopy) pushout square of spaces
\[
\begin{tikzcd}	
A \arrow[r] \arrow[d] & C \arrow[d]\\
B \arrow[r] & D
\arrow[ul, phantom, "\ulcorner" , very near start]
\end{tikzcd}	
\]
it is often desirable to understand the (homotopy) pullback $B \times_D C$.
Considered together with its map to the product $B \times C$, this pullback
describes \emph{path spaces} of $D$. 
The situation is perhaps best understood
by considering a pushout of \emph{sets} viewed as discrete spaces. A span of
sets and functions, $B \leftarrow A \to C$, describes a bipartite multigraph, with vertex
sets $B$, $C$ and edge set $A$. The pushout is the geometric realisation of
this graph, and path spaces of the pushout are described by the \emph{free
groupoid} on the graph.
An object of this free groupoid is an element of the disjoint union
$B \sqcup C$.
Every morphism in the free groupoid can be
represented by a path in the graph, zigzagging between $B$ and $C$, and two
paths describe the same morphism if they are equal modulo
\emph{backtracking}, i.e.\ going back and forth along the same edge. The
pullback $B \times_C D$ is thus the set of all zigzags from $B$ to $C$
modulo backtracking.
For pushouts of general spaces, the situation is a priori more complicated;
one ought to consider a free $\infty$-groupoid, with \emph{spaces} of morphisms
rather than plain sets.

In the case of the loop space $\Omega \Sigma X$ of the suspension of a pointed 
connected space $X$, a relatively simple description is offered by the James
construction~\cite{james}. Here $\Omega \Sigma X$ is the free $\infty$-group
on $X$ (as a \emph{pointed} space), 
which when $X$ is connected coincides with the free $\infty$-monoid on
$X$. The free monoid on a pointed \emph{set} $X$ is given by finite sequences of elements
of $X$ modulo inserting the basepoint of $X$ in some position.
The James construction gives essentially the same description of $\Omega \Sigma X$
for general pointed connected spaces $X$.

The James construction was given a purely homotopical description by
Brunerie~\cite{james-brunerie}. In this description, the key insight is that
the free $\infty$-monoid $JX$ on $X$ is freely generated by a point
$\varepsilon : JX$ and a pointed action of $X$ on $JX$, expressing that we can append
an element of $X$ to an element of $JX$, to get an element of $JX$, 
and that appending the basepoint of $X$ does nothing. 
This can be seen as a variant of what is often called `van der Waerden's trick'~\cite{waerden}.
Remarkably, this avoids mentioning composition on $JX$, let alone
higher $\infty$-monoid laws. This gives a \emph{recursive} universal property of
$JX$, since the action takes an element of $JX$ as input.
A priori this recursive nature makes analysing $JX$ more difficult.
Fortunately $JX$ also admits a description as the colimit of a sequence of approximations,
with each approximation given by a pushout.
This can be compared with the description of the naturals $\N$ as
a nested union of finite sets $\{0\}$, $\{0,1\}$, $\{0,1,2\}$ etc.;
note that the naturals have a recursive universal property coming 
from the successor self-map $\N \to \N$.

Kraus and von Raumer gave a recursive universal property for general pullbacks of 
pushouts~\cite{kvr19}, similar to the universal property of $JX$.
In a sense this universal property encapsulates descent for pushouts.
Again its recursive nature a priori makes pullbacks of pushouts difficult to analyse.

In this paper we introduce the \emph{zigzag construction} to address this
issue. Given a pushout square as above, we build a sequence of spaces $ R_1
\to R_3 \to R_5 \to \cdots $ such that $R_1$ is $A$ and the colimit
of the sequence is the pullback $B \times_D C$. Morally,
$R_n$ is the space of zigzags from $B$ to $C$ of length at most $n$ modulo
backtracking. Note that any such zigzag has odd length. The map $R_{n-2}
\to R_n$ is a pushout of a map $J_n \to Z_n$ where $Z_n$ is the space of
all zigzags of length exactly $n$ and $J_n$ is, morally, the space of such
zigzags that contain a backtracking in some position. More precisely, 
each fibre of $J_n \to Z_n$
is an $(n-1)$-fold join of fibres of the diagonals of $A \to B$ 
and $A \to C$. These diagonals express the possibility of two edges with
a common endpoint coinciding, i.e.\ the possibility of a backtracking.
A zigzag of length $n$ has $n-1$ positions where it might backtrack;
the join roughly corresponds to the disjunction of these possibilities.
The zigzag construction similarly gives a description of pullbacks like
$B \times_D B$, $C \times_D C$, $B \times_D A$, and $C \times_D A$.
It also gives a description of the fibre of any of these pullbacks
over a point of the corresponding product, e.g.\ the fibre of $B \times_D B$
over a point of $B \times B$; these fibres are path spaces of $D$.

Even more generally, given a map of spans from $Q \leftarrow P \to R$ to
$B \leftarrow A \to C$, we have an induced map of pushouts
$t : Q \sqcup^P R \to B \sqcup^A C$, and the zigzag construction
describes the pullback of $B \leftarrow A \to C$ along $t$.
In this way we can for example describe the fibre of the `pinch' map
$X \sqcup^A 1 \to \Sigma A$ from a cofibre to a suspension; 
cf.\ the relative James construction defined by Gray~\cite{relative-james}.

The join of spaces is a homotopical analogue of the disjunction of
propositions: the join of propositions viewed as spaces is their disjunction.
This fact can be used to show that if both maps $A \to B$ and $A \to C$
are 0-truncated, then all the maps in the zigzag construction are
monomorphisms. (We explain the meanings of these notions in \cref{section:embeddings}.)
In particular, this means that the cartesian gap map $A \to B \times_C D$ is
a monomorphism and that the maps $B \to D$, $C \to D$ are 0-truncated.
These facts recover fundamental results from Bass--Serre theory,
including the normal form theorem describing the fundamental group of a graph.
In particular, we show that if both maps $A \to B$ and $A \to C$ are 0-truncated and $B$, $C$ are 
1-truncated, then $D$ is also 1-truncated. This resolves a long-standing open
problem in homotopy type theory.

One can use basic connectivity estimates including join connectivity to show
that if at least one of the maps $A \to B$ and $A \to C$ have connected fibres,
then the connectivity of $R_n \to R_{n+2}$ grows linearly with $n$. In
this sense we get increasingly accurate approximations to the pullback $B
\times_D C$. This recovers the classical Blakers--Massey theorem
and its generalisation to $\infty$-topoi and general modalities
due to Anel, Biedermann, Finster and Joyal~\cite{abfj}.

This paper is written with three audiences in mind: those interested in
ordinary spaces, those interested in more general $\infty$-categories such
as $\infty$-topoi, and
those interested in homotopy type theory (HoTT).
The construction originates in and uses many ideas from HoTT, but we have translated
the arguments to use more conventional higher categorical, or \emph{diagrammatic}, 
language.
While general higher categorical reasoning remains inaccessible in HoTT,
we use only an elementary fragment of higher category theory, 
staying inside one category throughout and avoiding mention of general diagrams.
The zigzag construction can be interpreted in any $\infty$-category
with pullbacks, pushouts that are universal and satisfy descent,
and universal sequential colimits.
This includes the $\infty$-category of spaces and any other $\infty$-topos.
\subsection*{Outline}
The rest of the paper is structured as follows. 
\begin{itemize}
\item In \cref{preliminaries}, we 
present the setting of the construction and discuss
some prerequisite results on pullbacks, pushouts, and sequential colimits.
\item In \cref{section:zigzag}, we present the zigzag construction and
develop some tools for analysing it.
\item In \cref{connectivity}, we make same basic connectivity estimates and
explain how to recover the Blakers--Massey theorem.
\item In \cref{section:embeddings}, we give an application of a rather different sort,
describing pushouts of spans of 0-truncated maps.
\item In \cref{taking-fibres}, we explain how to describe path spaces rather than pullbacks,
and describe some specific examples of loop spaces.
\item In \cref{loop-wedge}, we describe the loop space of a wedge and show that it splits
after suspension.
\item Finally in \cref{conclusion}, we sketch some directions for further research.
\end{itemize}
\subsection*{Acknowledgements}

The author would like to thank Christian Sattler, Hugo Moeneclaey, Ulrik
Buchholtz, and Gregory Arone for helpful discussions, and Yan Jiatong and Ali
Caglayan for reporting typos in earlier versions of this document. In
particular, Christian Sattler provided useful ideas for translating the
arguments from type-theoretic to diagrammatic language, and
Ulrik Buchholtz explained how to describe HNN extensions using the
zigzag construction.

\section{Preliminaries}\label{preliminaries}

In this section, we explain the setting we are working in and the ingredients for
our construction. This setting can be read as a collection of familiar facts 
about the $\infty$-category of spaces, or as a description of the kind of
categories in which the construction can be instantiated.
We stress that no knowledge of type theory or higher topos theory should be
necessary to understand our constructions.
We are also not committed to any particular model of $\infty$-categories.

For the rest of this paper, we work in a fixed $\infty$-category $\mathcal E$,
	where we take $\infty$-category to mean $(\infty,1)$-category.
We refer to objects of $\mathcal E$ simply as objects, and any diagram
we draw is a diagram in $\mathcal E$. 
We call a map in $\mathcal E$ with a two-sided inverse an \emph{equivalence}.
We now
spell out the assumptions made on $\mathcal E$. First, we assume
that pullbacks and pushouts exist, in the higher
categorical sense. We
denote the pullback of a cospan $X \to Z \leftarrow Y$ by $X \times_Z Y$, and
the pushout of a span $B \leftarrow A \to C$ by $B \sqcup^A C$. Note that both
of these construction depend on the maps involved. Our notation forgets these
maps, but they are usually clear from context. Similarly, when we draw a square- or
cube-shaped shaped diagram, we have in mind that there are specified witnesses of commutativity,
but we will not make explicit mention of these
homotopies. 
We call a square cartesian if it is a pullback square, 
cocartesian if it is a pushout square, and bicartesian if it is both.
We use the notation $0$ for the initial object of $\mathcal E$ if
it exists, and $1$ for the terminal object. We denote
the coproduct of $A$ and $B$ by $A \sqcup B$.
Given a map $A \to B$, we say that $A$ is an object \emph{over} $B$.
Similarly, given a natural transformation from a diagram $A$ to a diagram $B$
we say that $A$ is a diagram over $B$. 

The universal properties of pullbacks and pushouts mean that 
in any commutative square
\[
\begin{tikzcd}	
A \arrow[r] \arrow[d] & C \arrow[d]\\
B \arrow[r] & D
\end{tikzcd}	
\]
we have unique maps $A \to B \times_D C$ and $B \sqcup^A C \to D$ making
the appropriate diagrams commute. We refer to these maps as the
cartesian gap map and cocartesian cogap map of the square, or
gap map and cogap map for short.

\subsection*{Mather's cube theorems}
We assume pushouts are universal and satisfy descent.
Explicitly this means that Mather's cube theorems are satisfied:
given a commutative cube
\[\begin{tikzcd}[ampersand replacement=\&,cramped]
	\bullet \&\&\& \bullet \\
	\&\& \bullet \&\&\& \bullet \\
	\\
	\bullet \&\&\& \bullet \\
	\&\& \bullet \&\&\& \bullet
	\arrow[from=5-3, to=5-6]
	\arrow[from=4-4, to=5-6]
	\arrow[from=4-1, to=5-3]
	\arrow[from=2-6, to=5-6]
	\arrow[from=1-4, to=4-4]
	\arrow[from=1-4, to=2-6]
	\arrow[from=1-1, to=4-1]
	\arrow[from=1-1, to=2-3]
	\arrow[from=1-1, to=1-4]
	\arrow[from=4-1, to=4-4]
	\arrow[from=2-3, to=5-3, crossing over]
	\arrow[from=2-3, to=2-6, crossing over]
\end{tikzcd}\]
in which the bottom face is cocartesian and 
the back and left faces are cartesian, the top face is cocartesian if
and only if the front and right faces are cartesian.
The backward direction says that pushouts are universal, i.e.\ stable under pullback.
The forward direction says that pushouts satisfy descent.

In homotopy type theory, descent corresponds to \emph{large elimination}.
This is the assertion that a type family over a pushout can be defined using the
universal property of the pushout, i.e.\ by pattern matching
on the constructors of the pushout.
This holds in particular in any setting with enough univalent universes,
since in this case a type family can be defined by mapping into the universe.
Universality of pushouts is usually understood in terms of the 
\emph{flattening lemma}~\cite{hottbook}.
It is a consequence of the existence of dependent products;
whenever dependent products exist, base change is a left adjoint and 
so preserves colimits.

\subsection*{Sequential colimits}
A \emph{sequence} consists
of an object $A_n$ for each $n : \N$ together with a map $A_n \to A_{n+1}$ for
each $n$. We denote the colimit of this sequence by $A_\infty$. 
We sometimes refer to the map $A_n \to A_\infty$ as the \emph{transfinite composition}
of $A_n \to A_{n+1} \to A_{n+2} \to \cdots$, and sometimes say that
the sequence is a \emph{filtration} of $A_\infty$.
We assume that sequential colimits are universal, i.e.\ stable under pullback. 
Explicitly, this means that if $(A_n)_{n : \N}$ is a sequence
and $X \to A_\infty$ is some object over the colimit, then the canonical map
$\colim_n (A_n \times_{A_\infty} X)\to X$ is an equivalence.
We also make use of the following strengthening.
\begin{lemma}\label{cofinality}
Let $\mathcal E$ be an $\infty$-category with pullbacks and universal sequential colimits.
Suppose given a diagram in $\mathcal E$ of the following shape,
where $(A_n)_{n : \N}$ and $(B_n)_{n : \N}$ are sequences with colimits $A_\infty$ and $B_\infty$.
\[\begin{tikzcd}[ampersand replacement=\&,cramped]
	{B_0} \& {B_1} \& \cdots \& {B_\infty} \& Y \\
	{A_0} \& {A_1} \& \cdots \& {A_\infty} \& X
	\arrow[from=1-1, to=2-1]
	\arrow[from=2-1, to=2-2]
	\arrow[from=1-1, to=1-2]
	\arrow[from=1-4, to=2-4]
	\arrow[from=2-4, to=2-5]
	\arrow[from=1-4, to=1-5]
	\arrow[from=1-5, to=2-5]
	\arrow[curve={height=6pt}, from=2-2, to=2-4]
	\arrow[curve={height=18pt}, from=2-1, to=2-4]
	\arrow[curve={height=-6pt}, from=1-2, to=1-4]
	\arrow[curve={height=-18pt}, from=1-1, to=1-4]
	\arrow[from=1-2, to=2-2]
	\arrow[from=1-2, to=1-3]
	\arrow[from=2-2, to=2-3]
\end{tikzcd}\]
If for infinitely many $n$, the gap map $B_n \to A_n \times_X Y$ is an equivalence,
   then so is the gap map $B_\infty \to A_\infty \times_X Y$.
\end{lemma}
\begin{proof}
Say $B_{n_i} \to A_{n_i} \times_X Y$ is an equivalence for each $i$ with
$n_0 < n_1 < \cdots$. Then $A_\infty$ and $B_\infty$ are the colimits of
$(A_{n_i})_{i : \N}$ and $(B_{n_i})_{i : \N}$ since the colimit
of a sequence agrees with the colimit along any subsequence.
By universality, we have
\[
	A_\infty \times_X Y 
		\simeq \colim_i (A_{n_i} \times_{A_\infty} (A_\infty \times_X Y))
		\simeq \colim_i (A_{n_i} \times_X Y)
		\simeq \colim_i B_{n_i}
		\simeq B_\infty.
\]
\end{proof}
\begin{remark}
The notion of a sequence becomes subtle in
an elementary $\infty$-topos and in homotopy type theory.
In a category with a natural numbers object $\N$,
we can define an \emph{internal} sequence to be an object $A$ over $\N$
together with an endomorphism $f : A \to A$ over the successor endomorphism $s : \N \to \N$.
A colimit of such an internal sequence is a coequaliser of 
$f$ and the identity map on $A$.
For contrast, let us call a sequence indexed by the external 
set of natural numbers an \emph{external} sequence.
Working with internal sequences is in a sense more permissive
than working with external sequences:
elementary $\infty$-topoi in the sense of Rasekh \cite{nima:elementary} do not in general admit
sequential colimits in the external sense, but do in the internal sense.
The discrepancy between internal and external natural numbers is the source of 
the problem of defining general higher structures in homotopy type theory 
\cite{higher-structures}.
In this paper, we work with external sequences, but the
constructions go through just as well with internal sequences, provided that
we have access to some mechanism for defining internal sequences by recursion
(e.g. a universe closed under appropriate operations).
In homotopy type theory, one has no choice but to work with internal sequences.
\end{remark}

\begin{definition}
Given two maps $X \to Z$ and $Y \to Z$ with common codomain, their
\emph{fibrewise join} $X \ast_Z Y \to Z$ is the cogap map of the
pullback square on $X \to Z \leftarrow Y$.
\end{definition}
This means that $X \ast_Z Y$ is the pushout $X \sqcup^{X \times_Z Y} Y$.
We call this a fibrewise join because the fibre over a point of $Z$
is a join of fibres of $X \to Z$ and $Y \to Z$; this follows from 
universality of pushouts. Also by universality of pushouts, the
fibrewise join is pullback stable in the sense that for any map $A \to Z$,
we have $(X \ast_Z Y) \times_Z A \simeq (X \times_Z A) \ast_A (Y \times_Z A)$.
The fibrewise join is closely related to the pushout product, which
features less prominently in our constructions.
\begin{definition}
Given two maps $f : A \to B$ and $g : X \to Y$, their
\emph{pushout product} $f \pprod g$
is the cogap map $(A \times Y) \sqcup^{A \times X} (B \times X) \to B \times Y$.
\end{definition}
\begin{definition}
Given pointed objects $A$ and $B$, the \emph{wedge inclusion} $A \vee B \to A \times B$
is the pushout product of $A \to A \times B$ and $B \to A \times B$.
Thus $A \vee B \simeq A\sqcup^1 B$.
\end{definition}
\begin{definition}
Given a map $f : A \to B$, the \emph{cofibre} of $f$ is the pushout $B \sqcup^A 1$.
\end{definition}
\begin{definition}
Given an object $A$, its \emph{suspension} $\Sigma A$ is the pushout $1 \sqcup^A 1$.
\end{definition}
\begin{definition}
Given objects $A$ and $B$, their \emph{smash product} $A \smsh B$
is the cofibre of the wedge inclusion $A \vee B \to A \times B$.
\end{definition}

A fundamental fact valid in any $\infty$-category is that (co)cartesian 
squares can be pasted. For cocartesian squares this means that in any 
diagram as follows, if the left square is cocartesian then the outer 
square is cocartesian if and only if the right square is cocartesian.
\[
\begin{tikzcd}
A \arrow[r] \arrow[d] & 
B \arrow[r] \arrow[d] & 
C \arrow[d] \\
X \arrow[r] & 
Y \arrow[r] 
\arrow[ul, phantom, "\ulcorner" , very near start]
& 
Z 
\end{tikzcd}
\]
We will use also the following more refined statement.

\begin{lemma}\label{cogap}
Consider a composite square in $\mathcal E$ as below.
\[
\begin{tikzcd}
A \arrow[r] \arrow[d] & 
B \arrow[r] \arrow[d] & 
C \arrow[d] \\
X \arrow[r] & 
Y \arrow[r] & 
Z 
\end{tikzcd}
\]
The cogap map $X \sqcup^A C \to Z$ of the outer square is the composite
of a pushout of the cogap map $X \sqcup^A B \to Y$ of the left square
followed by the cogap map $Y \sqcup^B C \to Z$ of the right square.
\end{lemma}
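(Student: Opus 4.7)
The plan is to exhibit the intermediate object as $Y \sqcup^B C$ and construct the desired factorisation by two applications of the pasting lemma for pushouts. Explicitly, I would factor the claimed map as
\[
X \sqcup^A C \longrightarrow Y \sqcup^B C \longrightarrow Z,
\]
where the second arrow is the cogap map of the right square by definition, and the task is to identify the first arrow as a pushout of the left cogap map $X \sqcup^A B \to Y$.

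To do this, I first observe that pasting the cocartesian square defining $X \sqcup^A B$ with the cocartesian square defining $(X \sqcup^A B) \sqcup^B C$ yields the cocartesian square defining $X \sqcup^A C$. Hence $X \sqcup^A C \simeq (X \sqcup^A B) \sqcup^B C$ canonically. Now I assemble the diagram
\[
\begin{tikzcd}
B \arrow[r] \arrow[d] & C \arrow[d] \\
X \sqcup^A B \arrow[r] \arrow[d] & X \sqcup^A C \arrow[d] \\
Y \arrow[r] & Y \sqcup^B C,
\end{tikzcd}
\]
in which the top square is cocartesian (by the identification above) and the outer rectangle is cocartesian (by definition of $Y \sqcup^B C$). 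By the pasting lemma for pushouts, the bottom square is cocartesian. This square exhibits $X \sqcup^A C \to Y \sqcup^B C$ as a pushout of the left cogap map $X \sqcup^A B \to Y$ along the canonical map $X \sqcup^A B \to X \sqcup^A C$, as required.

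It remains to check that the composite $X \sqcup^A C \to Y \sqcup^B C \to Z$ agrees with the cogap map of the outer composite square. This is a direct consequence of the universal property of $X \sqcup^A C$: both maps fit into the same cocone over the span $X \leftarrow A \to C$, with the appropriate triangles commuting by construction. Uniqueness of the cogap map then forces the two to coincide.

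The argument is almost entirely diagrammatic, so the only potentially subtle point is the bookkeeping of the commutative witnesses implicit in the two pasted squares; but since we are only invoking the pasting lemma in its standard form, no further obstacle arises.
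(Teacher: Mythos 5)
Your proposal is correct and follows essentially the same route as the paper: both identify $X \sqcup^A C \simeq (X \sqcup^A B) \sqcup^B C$ by (reverse) pasting, then paste again against the square defining $Y \sqcup^B C$ to exhibit $X \sqcup^A C \to Y \sqcup^B C$ as a pushout of the left cogap map. Your final paragraph verifying agreement with the outer cogap map via the universal property is a slightly more explicit rendering of a step the paper treats as immediate.
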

\begin{proof}
Consider the following diagram.
\[\begin{tikzcd}[ampersand replacement=\&,cramped]
	A \&\&\& B \&\&\& C \\
	\&\&\&\& Q \\
	\&\& P \&\&\& R \\
	X \&\&\& Y \&\&\& Z
	\arrow[from=1-1, to=1-4]
	\arrow[from=1-4, to=4-4]
	\arrow[from=4-1, to=4-4]
	\arrow[from=1-1, to=4-1]
	\arrow[from=4-4, to=4-7]
	\arrow[from=1-7, to=4-7]
	\arrow[from=3-6, to=4-7]
	\arrow[from=1-4, to=1-7]
	\arrow[from=2-5, to=3-6]
	\arrow[from=3-3, to=4-4]
	\arrow[from=4-1, to=3-3]
	\arrow[from=1-4, to=3-3]
	\arrow[from=3-3, to=2-5, crossing over]
	\arrow[from=4-4, to=3-6]
	\arrow[from=1-7, to=2-5]
	\arrow[from=1-7, to=3-6]
\end{tikzcd}\]
Here $P$ is $X \sqcup^A B$, $Q$ is $X\sqcup^A C$, and $R$ is $Y \sqcup^B C$.
The square $BCPQ$ is cocartesian by (reverse) pasting.
Thus $PQYR$ is also cocartesian by pasting.
This means that $Q \to R$ is a pushout of $P \to Y$. Since $Q \to Z$ is a composite
of $Q \to R$ and $R \to Z$, we are done.
\end{proof}


\section{The zigzag construction}\label{section:zigzag}

For the rest of this section, we consider a fixed pushout square as shown below.
\[
\begin{tikzcd}	
A \arrow[r] \arrow[d] & C \arrow[d]\\
B \arrow[r] & D
\arrow[ul, phantom, "\ulcorner" , very near start]
\end{tikzcd}	
\]
Our goal is to describe the pullback of this square along $B \to D$.
More generally, we consider the pullback along any map 
$S \to D$ where $S$ is the pushout of a span over $B \leftarrow A \to C$ 
and $S \to D$ is the induced map on pushouts. We can obtain the map $B \to D$ in this
way as $B$ is the pushout of $B \leftarrow 0 \to 0$, or of $B \leftarrow A \to A$.
\begin{construction}\label{making-cartesian}
Given a map of spans from $Q \leftarrow P \to R$ to $B \leftarrow A \to C$,
we can factorise it in the following two ways:
\begin{enumerate}[(i)]
\item via the span 
$Q^l \leftarrow P^l \to R^l$ where $Q^l \coloneqq Q$, $P^l \coloneqq Q \times_B A$,
and $R^l \coloneqq P^l \sqcup_P R$. 
We refer to this as \emph{making the left leg cartesian}.
\[
\begin{tikzcd}	
Q \arrow[d,"\sim"{sloped}] &
P 
\arrow[l] \arrow[r]
\arrow[d] &
R \arrow[d] \\
Q^l \arrow[d] &
P^l
\arrow[dl, phantom, "\llcorner" , very near start]
\arrow[l] \arrow[r]
\arrow[d] &
R^l \arrow[d] 
\arrow[ul, phantom, "\ulcorner" , very near start]
\\
B & A
\arrow[l] \arrow[r] &
C 
\end{tikzcd}	
\]
\item 
via the span 
$Q^r \leftarrow P^r \to R^r$ where 
$R^r \coloneqq R$, $P^r \coloneqq R \times_D A$,
and $Q^r \coloneqq P^r \sqcup_P Q$. 
We refer to this as making the \emph{right} leg cartesian.
\[
\begin{tikzcd}	
Q \arrow[d] &
P 
\arrow[l] \arrow[r]
\arrow[d] &
R \arrow[d,"\sim"{sloped}] \\
Q^r \arrow[d] 
\arrow[ur, phantom, "\urcorner" , very near start]
&
P^r
\arrow[dr, phantom, "\lrcorner" , very near start]
\arrow[l] \arrow[r]
\arrow[d] &
R^r \arrow[d] 
\\
B & A
\arrow[l] \arrow[r] &
C 
\end{tikzcd}	
\]
\end{enumerate}
\end{construction}
\begin{remark}\label{idempotent}
If $P \to Q$ is already cartesian over $A \to B$, then 
(i) above does nothing, i.e.\ the span $Q^l \leftarrow P^l \to R^l$ is
equivalent to $Q \leftarrow P \to R$. 
Similarly if $P \to R$ is cartesian over $A \to C$, then (ii)
does nothing. In particular, both (i) and (ii) are idempotent.
\end{remark}
\begin{lemma}\label{pushout-unchanged}
The maps of spans from $Q \leftarrow P \to R$ to
$Q^l \leftarrow P^l \to R^l$ and $Q^r \leftarrow P^r \to R^r$ both
induce equivalences on pushouts. That is, making a leg cartesian leaves pushouts unchanged.
\end{lemma}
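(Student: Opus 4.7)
The plan is to prove both equivalences by a single pasting argument for pushouts, taking advantage of the fact that the new left/right leg is itself defined as a pushout.

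For case (i), the key observation is that by construction $R^l \coloneqq P^l \sqcup^P R$, so the pushout $Q^l \sqcup^{P^l} R^l$ can be rewritten as $Q \sqcup^{P^l} (P^l \sqcup^P R)$. I would stack the two defining pushout squares vertically,
\[
\begin{tikzcd}
P \arrow[r] \arrow[d] & R \arrow[d] \\
P^l \arrow[r] \arrow[d] & R^l \arrow[d] \\
Q \arrow[r] & Q \sqcup^{P^l} R^l
\end{tikzcd}
\]
where the upper square is cocartesian by definition of $R^l$ and the lower square is cocartesian by definition of the pushout on the right. Pasting of cocartesian squares then shows the outer rectangle is cocartesian. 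The only thing to check is that the composite left leg $P \to P^l \to Q$ and the composite top edge $P \to R$ agree with the maps in the original span $Q \leftarrow P \to R$: the top edge is literally unchanged, and the bottom composite reduces to $P \to Q$ by unfolding the definition of the cartesian gap map $P \to P^l = Q \times_B A$. This yields $Q \sqcup^P R \simeq Q^l \sqcup^{P^l} R^l$.

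Case (ii) is entirely symmetric: $Q^r \coloneqq P^r \sqcup^P Q$ is a pushout, so writing $Q^r \sqcup^{P^r} R^r = (P^r \sqcup^P Q) \sqcup^{P^r} R$ and pasting the two pushout squares along $P^r$ gives the outer cocartesian square with legs $P \to Q$ and $P \to P^r \to R = R^r$. Again the gap map composite reduces to the original map $P \to R$, and we conclude $Q \sqcup^P R \simeq Q^r \sqcup^{P^r} R^r$.

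The argument uses only pasting of cocartesian squares, which holds in any $(\infty,1)$-category with pushouts, so no descent or pullback-stability is needed here. There is no substantive obstacle; the only mild subtlety is making sure to identify the composite legs of the pasted square with the original legs of the span, but this is immediate from the defining triangles of the pullbacks $P^l$ and $P^r$.
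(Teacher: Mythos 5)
Your proof is correct and is essentially the paper's own argument: the paper also pastes the cocartesian square defining $R^l = P^l \sqcup^P R$ with the square defining $Q \sqcup^{P^l} R^l$ and concludes by (reverse of) pasting, treating case (ii) as symmetric. Your extra remark that the composite $P \to P^l \to Q$ agrees with the original leg via the gap map is a reasonable point of care that the paper leaves implicit.
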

\begin{proof}
By pushout pasting. 
We display the pasting diagram for making the left leg cartesian
below; the other case is symmetric. Note that $Q = Q^l$.
\[
\begin{tikzcd}	
P \arrow[r] \arrow[d]&
P^l
\arrow[r]
\arrow[d] &
Q \arrow[d] 
\\
R 
\arrow[r]
&
R^l 
\arrow[r]
\arrow[ul, phantom, "\ulcorner" , very near start]
&
R^l \sqcup_{P^l} Q
\arrow[ul, phantom, "\ulcorner" , very near start]
\end{tikzcd}\]
\end{proof}
\begin{construction}
[The zigzag construction]
\label{zigzag-construction}
Given a span $Q_0 \leftarrow P_0 \to R_0$ over $B \leftarrow A \to C$,
we define a sequence of spans over $B \leftarrow A \to C$
denoted $Q_n \leftarrow P_n \to R_n$ for $n : \N$,
		as follows.
The zeroth span is already given.
We define $Q_1 \leftarrow P_1 \to R_1$ by applying \cref{making-cartesian}~(i)
to the zeroth span. We define $Q_2 \leftarrow P_2 \to R_2$ by applying \cref{making-cartesian}~(ii)
to $Q_1 \leftarrow P_1 \to R_1$, and so on, making the left leg cartesian for odd $n$
and the right left cartesian for even $n \ge 2$.
\end{construction}
Below is a picture of the first few steps of the zigzag construction in the
general case and in the case where we start from $B \leftarrow 0 \to 0$.
Following this picture, we call the maps $P_n \to P_{n+1}$, $Q_n \to Q_{n+1}$,
and $R_n \to R_{n+1}$ the \emph{vertical} maps in the zigzag construction.
The fact that $Q_n \simeq Q_{n+1}$ for $n$ even and $R_n \simeq R_{n+1}$ for $n$ odd
means that we have two names for the same thing. 
We prefer to use the \emph{first} name, i.e.\ we index $Q$ by even integers and $R$ by odd integers.
For consistency we thus also prefer to refer to $R_0$ as $R_{-1}$ (this is anyway the initial object
in cases of interest).
\[
\begin{tikzcd}	
Q_0 \arrow[d,"\sim"{sloped}] &
P_0 
\arrow[l] \arrow[r]
\arrow[d] &
R_0 \arrow[d] \\
Q_1 \arrow[d] &
P_1
\arrow[l,"\textup{cart.}", swap] \arrow[r]
\arrow[d] &
R_1 \arrow[d,"\sim"{sloped}] 
\arrow[ul, phantom, "\ulcorner" , very near start]
\\
Q_2 \arrow[d,"\sim"{sloped}] 
\arrow[ur, phantom, "\urcorner" , very near start]
&
P_2
\arrow[l] \arrow[r,"\textup{cart.}"]
\arrow[d] &
R_2 \arrow[d] 
\\
\vdots & \vdots & \vdots \\
B & A
\arrow[l] \arrow[r] &
C   & & &  
\end{tikzcd}	
\begin{tikzcd}	
B \arrow[d,"\sim"{sloped}] &
0
\arrow[l] \arrow[r, "\textup{cart.}"]
\arrow[d] &
0 \arrow[d] \\
B \arrow[d] &
A
\arrow[l,"\textup{cart.}", swap] \arrow[r]
\arrow[d] &
A \arrow[d,"\sim"{sloped}] 
\arrow[ul, phantom, "\ulcorner" , very near start]
\\
Q_2 \arrow[d,"\sim"{sloped}] 
\arrow[ur, phantom, "\urcorner" , very near start]
&
A \times_C A
\arrow[l] \arrow[r,"\textup{cart.}"]
\arrow[d] &
A \arrow[d] 
\\
\vdots & \vdots & \vdots \\
B & A
\arrow[l] \arrow[r] &
C 
\end{tikzcd}	
\]
\begin{remark}
There is an arbitrary choice in the construction, to make the left leg
cartesian instead of the right leg in the first step. The other choice is of course
also possible. In the examples we consider, one of the legs of the zeroth span
is already cartesian. 
In this case the choice only introduces an offset in the indexing of the sequence,
by \cref{idempotent}.
\end{remark}
\begin{theorem}\label{main-theorem}
Let $\mathcal E$ be an $\infty$-category with
pullbacks, pushouts, and sequential colimits.
Suppose that pushouts are universal and satisfy descent and that
sequential colimits are universal.
Suppose as in \cref{zigzag-construction} we are
given a span $Q_0 \leftarrow P_0 \to R_0$ over $B \leftarrow A \to C$ in $\mathcal E$,
and denote the pushouts $Q_0 \sqcup^{P_0} R_0$ and $B \sqcup^A C$ by $S$ and $D$ respectively.
Let $Q_\infty \leftarrow P_\infty \to R_\infty$
denote the sequential colimit of the sequence
$(Q_n \leftarrow P_n \to R_n)_{n \in \mathbb N}$ from \cref{zigzag-construction}.
Then we have the following cube
where the horizontal faces are pushouts and all four vertical faces are pullbacks.
In particular we have $Q_\infty \simeq S \times_D B$, $R_\infty \simeq S \times_D C$,
   and $P_\infty \simeq S \times_D A$.
\[\begin{tikzcd}[ampersand replacement=\&,cramped]
	{P_\infty} \&\&\& {R_\infty} \\
	\&\& {Q_\infty} \&\&\& S \\
	\\
	A \&\&\& C \\
	\&\& B \&\&\& D
	\arrow[from=5-3, to=5-6]
	\arrow[from=4-1, to=5-3]
	\arrow[from=4-4, to=5-6]
	\arrow[from=1-4, to=2-6]
	\arrow[from=1-1, to=2-3]
	\arrow[from=1-1, to=4-1]
	\arrow[from=1-1, to=1-4]
	\arrow[from=2-6, to=5-6]
	\arrow[from=1-4, to=4-4]
	\arrow[from=4-1, to=4-4]
	\arrow[from=2-3, to=5-3, crossing over]
	\arrow[from=2-3, to=2-6, crossing over]
\end{tikzcd}\]
\end{theorem}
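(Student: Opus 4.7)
My plan is to verify the cube in stages: (i) show the top face is cocartesian, (ii) show the left and back faces are cartesian using \cref{cofinality}, (iii) apply descent (the forward direction of Mather's cube theorem) to deduce that the front and right faces are cartesian, and (iv) conclude $P_\infty \simeq S \times_D A$ by pullback pasting. The bottom face is cocartesian by hypothesis, as $D = B \sqcup^A C$.

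For step (i), each transition $(Q_n \leftarrow P_n \to R_n) \to (Q_{n+1} \leftarrow P_{n+1} \to R_{n+1})$ is precisely an instance of making one of the legs cartesian, so it induces an equivalence on pushouts by \cref{pushout-unchanged}. Hence the sequence of pushouts is a sequence of equivalences starting at $S$, with colimit $S$. Since sequential colimits commute with pushouts (both being colimits), this colimit equals $Q_\infty \sqcup^{P_\infty} R_\infty$, giving the top face.

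For step (ii), I consider the sequence of left-face squares at level $n$ with vertices $P_n, Q_n, A, B$ and transition morphisms given by the vertical maps of the zigzag (the maps on $A$ and $B$ being identities). At each odd $n$ we have $P_n \simeq Q_n \times_B A$ by construction, so the square is cartesian. Infinitely many squares in the sequence are thus cartesian, and \cref{cofinality} gives that the colimit, which is the left face of the cube, is cartesian. The analogous argument applied to the back-face squares $(P_n, R_n, A, C)$ for even $n \geq 2$ shows that the back face is cartesian.

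For step (iii), we now have the bottom and top faces cocartesian and the back and left faces cartesian; the forward direction of descent formulated in the preliminaries yields that the front and right faces are cartesian, giving $Q_\infty \simeq S \times_D B$ and $R_\infty \simeq S \times_D C$. For step (iv), pasting the cartesian left face with the cartesian front face gives $P_\infty \simeq Q_\infty \times_B A \simeq (S \times_D B) \times_B A \simeq S \times_D A$. The main obstacle is essentially bookkeeping: one must verify that the alternating sub-sequences of left- and back-cartesian squares are both cofinal in $\N$, which is immediate from the strict alternation built into \cref{zigzag-construction}. Beyond this, the argument is a formal combination of \cref{pushout-unchanged}, \cref{cofinality}, and descent, with no further facts specific to the zigzag needed.
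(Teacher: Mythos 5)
Your proposal is correct and follows essentially the same route as the paper's own proof: interchange of sequential colimits with pushouts plus \cref{pushout-unchanged} for the top face, \cref{cofinality} applied to the alternating cartesian legs for the left and back faces, and descent for the front and right faces. The only addition is your explicit pasting argument for $P_\infty \simeq S \times_D A$, which the paper leaves implicit.
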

\begin{proof}
We first claim that $S$ is the pushout of the colimit span $Q_\infty \leftarrow P_\infty \to R_\infty$.
By applying interchange of pushouts and sequential colimits to the sequence of
spans $Q_n \leftarrow P_n \to R_n$, we see that the pushout of the colimit span
is the sequential colimit of the pushout sequence. By \cref{pushout-unchanged}, this
is a sequence of equivalences, starting with $S$. Since equivalences are closed under transfinite composition, the colimit is $S$, as an object over $D$.
This means that we at least have a cube as shown, 
where the top and bottom faces are pushout squares.
We claim that the left face is cartesian.
This follows from \cref{cofinality} since $P_n \to Q_n$ is cartesian over $A \to B$ for
all odd $n$. 
Similarly the back face is cartesian since $P_n \to R_n$ is cartesian over $A \to C$
for all even $n \ge 2$.
Finally descent tells us that the front and right faces are cartesian.
\end{proof}

It will be useful to have a more precise understanding of what changes from one step
to the next in \cref{zigzag-construction}.
This is controlled by the maps $P_{n-1} \to P_n$:
among the other two maps, $Q_{n-1} \to Q_n$ and $R_{n-1} \to R_n$,
one is an equivalence and the other one is a pushout of $P_{n-1} \to P_n$.
The following somewhat technical lemma provides a way to recursively describe $P_{n-1} \to P_n$ as a pushout.

\begin{lemma}\label{pushout-step}
\begin{enumerate}[(i)]
\item Let $Q \leftarrow P \to R$ be a span over $B \leftarrow A \to C$ with
$P \to R$ cartesian over $A \to C$.
Suppose given a pushout square as follows.
\[
\begin{tikzcd}	
X \arrow[r] \arrow[d] & P \arrow[d]\\
Y \arrow[r] & P^l
\arrow[ul, phantom, "\ulcorner" , very near start]
\end{tikzcd}	
\]
In particular we have a composite map $Y \to P^l \to A$, and also a map $Y \to C$.
Then we have a pushout square
\[
\begin{tikzcd}	
X' \arrow[r] \arrow[d] & P^l \arrow[d]\\
Y \times_C A \arrow[r] & P^{lr}
\arrow[ul, phantom, "\ulcorner" , very near start]
\end{tikzcd}	
\]
where $X'$ is the fibrewise join of $X \times_C A \to Y \times_C A$ and $Y \to Y \times_C A$.
Moreover, the composite $Y \times_C A \to P^{lr} \to A$ is projection onto the second factor.

\item Dually, if $Q \leftarrow P \to R$ is a span over $B \leftarrow A \to C$
with $P \to Q$ cartesian over $A \to B$ and we have a pushout square
\[
\begin{tikzcd}	
X \arrow[r] \arrow[d] & P \arrow[d]\\
Y \arrow[r] & P^r
\arrow[ul, phantom, "\ulcorner" , very near start]
\end{tikzcd}	
\]
then we have another pushout square
\[
\begin{tikzcd}	
X' \arrow[r] \arrow[d] & P^r \arrow[d]\\
Y \times_B A \arrow[r] & P^{rl}
\arrow[ul, phantom, "\ulcorner" , very near start]
\end{tikzcd}	
\]
where $X'$ is the fibrewise join of $X \times_B A \to Y \times_B A$ and
$Y \to Y \times_B A$, and again the composite $Y \times_B A \to P^{rl} \to A$
is projection onto the second factor.
\end{enumerate}
\end{lemma}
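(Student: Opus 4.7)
I will focus on part~(i); part~(ii) is obtained by swapping the roles of $B$ and $C$ throughout. The plan is to compute $P^{lr}$ directly from \cref{making-cartesian}, and then to exhibit the claimed square as one factor of a pasting whose outer rectangle is the resulting description.

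First I would compute $P^{lr}$ explicitly. By \cref{making-cartesian}(ii) applied to $Q^l \leftarrow P^l \to R^l$, we have $P^{lr} \simeq R^l \times_C A$, where $R^l = P^l \sqcup^P R$. Pulling this pushout back along $A \to C$ via universality of pushouts, and invoking the hypothesis that $P \to R$ is cartesian over $A \to C$ (so $R \times_C A \simeq P$), gives a first cocartesian square with corners $P \times_C A, P, P^l \times_C A, P^{lr}$. Pulling the given pushout $P^l \simeq Y \sqcup^X P$ back along $A \to C$ and pasting with the previous square, the $P \times_C A$ and $P^l \times_C A$ columns cancel to yield a cocartesian square
\[
\begin{tikzcd}
X \times_C A \arrow[r] \arrow[d] & P \arrow[d] \\
Y \times_C A \arrow[r] & P^{lr}
\end{tikzcd}
\]
whose top map is the composite $X \times_C A \to X \to P$.

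Next I would identify $X'$. The map $Y \to Y \times_C A$ is the section of the second projection determined by $Y \to A$, and $X \to Y \to A$ agrees with the structure map $X \to A$ (both being restrictions of $P^l \to A$). A direct pullback computation then gives $(X \times_C A) \times_{Y \times_C A} Y \simeq X$, so $X' \simeq (X \times_C A) \sqcup^X Y$ as a pushout over $X$. I would then check that the square
\[
\begin{tikzcd}
X \times_C A \arrow[r] \arrow[d] & P \arrow[d] \\
X' \arrow[r] & P^l
\end{tikzcd}
\]
is cocartesian: pasting it above the fibrewise-join pushout with corners $X, X \times_C A, Y, X'$ recovers the given pushout $P^l = Y \sqcup^X P$, so this square is cocartesian by pasting.

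Finally I would paste this last square on top of the proposed pushout with corners $X', P^l, Y \times_C A, P^{lr}$; the outer rectangle then matches the cocartesian square from the first step, so pasting shows that the proposed square is cocartesian, which is the claim. The assertion that $Y \times_C A \to P^{lr} \to A$ is projection onto the second factor is then immediate: by construction $P^{lr} \to A$ is the second projection $R^l \times_C A \to A$, while the map $Y \times_C A \to P^{lr}$ produced by the construction is induced by $Y \to P^l \to R^l$ and thus preserves the $A$-coordinate. The main obstacle is bookkeeping, making sure the induced maps between all the auxiliary objects fit together coherently; the substantive ingredients are just the two applications of universality of pushouts, the cancellation $R \times_C A \simeq P$, and the identification of $X'$ with a pushout over $X$.
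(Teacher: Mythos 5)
Your proposal is correct and follows essentially the same route as the paper: both arguments compute $P^{lr} \simeq R^l \times_C A$, pull the two pushouts $R^l \simeq P^l \sqcup^P R$ and $P^l \simeq Y \sqcup^X P$ back along $A \to C$, use the hypothesis $R \times_C A \simeq P$ together with the identification $X \simeq (X \times_C A) \times_{Y \times_C A} Y$, and finish by (reverse) pushout pasting --- the paper merely packages the pasting through \cref{cogap} and a cube, where you use two rectangle decompositions. One small imprecision: the top map $X \times_C A \to P$ of your composite cocartesian square is the restriction of $P \times_C A \to R \times_C A \simeq P$, which is \emph{not} the projection-followed-by-inclusion $X \times_C A \to X \to P$ (the two agree only after restricting along the section $X \to X \times_C A$). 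This does not derail the argument, since that restriction is all your reverse-pasting step actually needs, but the final pasting should be carried out with the correct map so that the outer rectangles literally coincide.
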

\begin{proof}
We prove only the first statement since the second statement is symmetric.
By construction we have
$P^{lr} \simeq R^l \times_C A$ and $R^l \simeq P^l \sqcup^P R$.
Universality of pushouts gives the right cocartesian square in the following diagram.
\[ \begin{tikzcd}
P
\arrow[r]
\arrow[d]
\arrow[dr, phantom, "\lrcorner" , very near start]
& P \times_C A
\arrow[r]
\arrow[d]
& R \times_C A
\arrow[d]
\\ P^l
\arrow[r]
& P^l \times_C A
\arrow[r]
& P^{lr}
\arrow[ul, phantom, "\ulcorner" , very near start]
\end{tikzcd} \]
The left square is a pullback by pasting, and the top composite is an equivalence.
Consider the result of applying \cref{cogap} to the above diagram.
The cogap map of the outer square is simply the bottom composite
$P^l \to P^{lr}$.
The cogap map of the left square is an equivalence, so
we find that $P^l \to P^{lr}$ is a pushout of the cogap map of the left square.
Thus it suffices to show that the cogap map of the left square is a pushout of
$X' \to Y \times_C A$.
Consider now the following diagram.
\[\begin{tikzcd}[ampersand replacement=\&,cramped]
	X \&\&\& Y \\
	\&\& {X \times_C A} \&\&\& {X'} \&\&\& {Y \times_C A} \\
	\\
	P \&\&\& {P^l} \\
	\&\& {P \times_C A} \&\&\& \bullet \&\&\& {P^l \times_C A}
	\arrow[from=2-6, to=2-9]
	\arrow[from=5-6, to=5-9]
	\arrow[from=2-6, to=5-6]
	\arrow[from=5-3, to=5-6]
	\arrow[from=4-1, to=5-3]
	\arrow[from=1-1, to=2-3]
	\arrow[from=1-4, to=2-6]
	\arrow[from=1-1, to=1-4]
	\arrow[from=4-4, to=5-6]
	\arrow[from=1-4, to=4-4]
	\arrow[from=1-1, to=4-1]
	\arrow[from=2-9, to=5-9]
	\arrow[from=4-1, to=4-4]
	\arrow[from=2-3, to=5-3, crossing over]
	\arrow[from=2-3, to=2-6, crossing over]
\end{tikzcd}\]
We have $X \simeq (X \times_C A) \times_{Y \times_C A} Y$ by pullback pasting,
so the top square of the cube is simply the pushout square defining $X'$.
The bottom square is also cocartesian.
The back square is the cocartesian square given by assumption.
The whole cube is induced by functoriality of pushouts.
By pushout pasting, the front face of the cube is cocartesian.
By universality of pushouts, the outer front square is cocartesian.
By reverse pushout pasting, the rightmost square is cocartesian,
as needed.
\end{proof}
If we drop the assumption that the right leg $P \to R$ is cartesian in \cref{pushout-step}~(i),
we can still say something useful if a bit more complicated. 
Namely, $P^l \to P^{lr}$ is a composite of the pushout of the gap
map $P \to R \times_C A$ along $P \to P^l$ with a pushout of the map
$X' \to Y \times_B A$ defined as in (i). We will not make use of this.

\begin{construction}\label{zigzag-analysis}
Importantly, \cref{pushout-step} can be iterated.
Consider \cref{zigzag-construction} starting from a span $Q_0 \leftarrow P_0 \to R_0$ over
$B \leftarrow A \to C$ with $P_0 \to R_0$ cartesian over $A \to C$.
We define for every $n \ge 1$ two objects $J_n$, $Z_n$ with a map $J_n \to Z_n$ and a pushout square of the following form.
\[
\begin{tikzcd}	
J_n \arrow[r] \arrow[d] & P_{n-1} \arrow[d]\\
Z_n \arrow[r] & P_n
\arrow[ul, phantom, "\ulcorner" , very near start]
\end{tikzcd}	
\]
For $n = 1$ we take the trivial pushout square witnessing that $P_0 \to P_1$ is a pushout of itself.
For $n \ge 2$ even we apply \cref{pushout-step} (i) to the pushout square for $n-1$, and
for $n \ge 3$ odd we similarly apply \cref{pushout-step} (ii).
\end{construction}

Thus the whole zigzag construction is controlled by the maps $J_n \to Z_n$:
the map $P_n \to P_\infty$ is a transfinite composition of pushouts of the maps
$J_m \to Z_m$ for $m \ge n+1$.
Similarly, $Q_n \to Q_\infty$ for $n$ even is transfinite composition of pushouts of the maps $J_m \to Z_m$ for $m \ge n+2$ even,
and $R_n \to R_\infty$ for $n$ odd is a transfinite composition of pushouts of the maps $J_m \to Z_m$ for $m \ge n+2$ odd.
It remains to give a more explicit description of $J_n \to Z_n$.

By construction, $Z_n$ is simply an iterated pullback $Q_0 \times_B A \times_C A \cdots$
containing $n$ copies of $A$, so that for example $Z_1$ is $Q_0 \times_B A \simeq P_1$.
Thinking of an element of $A$ as an edge with one endpoint in $B$ and the other in $C$,
we can picture an element of $Z_n$ as a zigzag of $n$ edges, starting in $B$,
where the first vertex (in $B$) is furthermore labelled by an element of the fibre of $Q_0$.
More formally, $Z_n$ is the limit of the following diagram (in the case $n = 3$; in general the diagram contains
$n$ copies of $A$).
\[\begin{tikzcd}[ampersand replacement=\&,cramped]
	{Q_0} \&\& A \&\& A \&\& A \\
	\& B \&\& C \&\& B
	\arrow[from=1-1, to=2-2]
	\arrow[from=1-3, to=2-2]
	\arrow[from=1-3, to=2-4]
	\arrow[from=1-5, to=2-4]
	\arrow[from=1-5, to=2-6]
	\arrow[from=1-7, to=2-6]
\end{tikzcd}\]

Universality of pushouts means that $J_n$ is a \emph{fibrewise join} of $n$ maps into $Z_n$.
The first of these $n$ maps is the pullback of $P_0 \to P_1$ along the projection $Z_n \to P_1$;
if $P_0$ is empty we can simply omit this map from the fibrewise join.
For $n\ge 2$, the next map is a pullback of the diagonal $A \to A \times_C A$ along
the projection $Z_n \to A \times_C A$; this corresponds to looking at the first
two edges in a zigzag and asking if they coincide as edges with a common endpoint in $C$.
All other maps are defined similarly, as pullbacks of $A \to A \times_C A$
or of $A \to A \times_B A$, expressing the possibility of a zigzag having a backtracking in
one of $n-1$ possible positions.

The existence of the map $J_n \to P_{n-1}$ expresses \emph{confluence of reduction}:
if a zigzag has a backtracking in different positions, then one can reduce in any
of those positions to get the same result modulo further reduction.

The zigzag construction is a priori asymmetric, in the sense that
a priori one gets different filtrations of $B \times_D C$ depending
on whether we start from $B \leftarrow 0 \to 0$ and consider $S \times_D C$,
or start from $0 \leftarrow 0 \to C$ and consider $S \times_D B$.
We expect, but do not prove, that these two filtrations are actually the same.
\cref{zigzag-analysis} goes some way toward proving this.

\begin{remark}
Let us make a comparison with the James construction.
Let $A$ be a pointed object of $\mathcal E$ and let $J_nA$ be the sequence given by the James construction
as in \cite{james-brunerie}.
Thus $J_0 A \to J_1A$ is the basepoint inclusion $1 \to A$ and 
for $n\ge 1$, $J_n A \to J_{n+1} A$ is a certain pushout of 
the pushout product of $J_{n-1}A \to J_n$ and $1 \to A$.
Because the pushout product is functorial in pushout squares~\cite[Lemma 2.2.3]{acyclic},
whenever $J_{n-1}A \to J_n A$ is a pushout of some map $X \to Y$,
we have that $J_n A \to J_{n+1} A$ is a pushout of the pushout product of
$X \to Y$ and $1 \to A$.
This fact is analogous to \cref{pushout-step}.
It can also be iterated: $J_{n-1} A \to J_n A$ is a pushout of
the pushout product of $1 \to A$ with itself $n$ times.
This iterated pushout product is a homotopical description a \emph{fat wedge} inclusion
$\fw^n(A) \to A^n$.
Salient properties of the James construction can be deduced from the fact that
$J_{n-1}A \to J_n A$ is a pushout of $\fw^n(A) \to A^n$.

We can thus understand the James construction as 
adding at stage $n$ all lists of $n$ elements of
$A$, and gluing those lists where some element is trivial onto the previous step.
Algebraically this corresponds to a reduction using a \emph{unit} law of a monoid, 
like $a_1 \cdot 1 \cdot a_3 = a_1 \cdot a_3$.
In contrast, the zigzag construction adds at stage $n$ zigzags of exactly $n$ edges,
and glues those zigzags where some adjacent pair of edges cancel onto the previous step.
Algebraically this corresponds to a reduction using an \emph{inverse} law of a groupoid, 
like $a_1 a_2^{-1} a_2 a_4^{-1} = a_1 a_4^{-1}$.
The inverses appear because zigzags go back and forth between $B$ and $C$;
interpreted as paths, every other edge must be inverted.
\end{remark}

\begin{remark}
One can give a more type-theoretic description of the zigzag construction.
A span $B \leftarrow A \to C$ can equivalently be described as a pair of types $B$ and $C$
together with a type family $A(b,c)$ over $b : B$, $c : C$.
If $Q$ and $R$ are objects over $B$ and $C$ then it is natural to also represent these
as type families over $B$ and $C$.
Completing this picture to a span with the right leg cartesian means
that for all $b : B$, $c : C$, $a : A(b,c)$, we have a map $R(c) \to Q(b)$.
In this way we do not have to mention $P$ as a type on its own.
Dually, completing the picture to a span with the left leg cartesian means
that for $b : B$, $c : C$, $a : A(b,c)$ we have a map $Q(b) \to R(c)$.

The zigzag construction starting from a span with the right leg cartesian can thus
be understood as building a sequence of type families $Q_n$ over $B$, indexed by even $n \ge 0$
and a sequence of type families $R_n$ over $C$, indexed by odd $n \ge -1$, so 
that for $b : B$, $c : C$, $a : A(b, c)$ we have
for $n$ even a map $Q_n(b) \to R_{n+1}(c)$ and for $n$ odd a map
$R_n(c) \to Q_{n+1}(b)$.
These maps fit into the following commutative diagram, again given $a  : A(b,c)$.
\[
\begin{tikzcd}
& Q_0(b) \arrow[dr] \arrow[rr] & & Q_2(b) \arrow[dr] \arrow[rr] & & \cdots\\
R_{-1}(c) \arrow[ur] \arrow[rr] & & R_1(c) \arrow[ur] \arrow[rr] & & R_3(c)  \arrow[ur] \arrow[rr] & & \cdots
\end{tikzcd}
\]
In the limit, we get a span over $B \leftarrow A \to C$ with both legs cartesian, which
corresponds to having for $a : A(b,c)$ an \emph{equivalence} $Q(b) \simeq R(c)$.
In our proof of \cref{main-theorem} we argued that
given a sequence of spans over $B \leftarrow A \to C$ where every other span has left leg cartesian
and every other span has right leg cartesian, in the colimit both legs are cartesian.
This corresponds to the fact that the diagram above induces an equivalence between
the colimit of the top and bottom sequences (indeed, both colimits are equivalent to the colimit of
the interleaved sequence $R_{-1}(c) \to Q_0(b) \to R_1(c) \to \cdots$).

We can also describe the meaning of the equivalence $Q_\infty(b) \simeq R_\infty(c)$:
descent ensures that $Q_\infty$ and $R_\infty$ extend to define a type family $S$ over the pushout $B \sqcup^A C$,
so that $Q_\infty(b) \simeq S(\inl(b))$ and $R_\infty(c) \simeq S(\inr(c))$,
and the equivalence $Q_\infty(b) \simeq R_\infty(c)$ corresponds to the \emph{transport} of this
type family $S$ along a path constructor $\glue(a) : \inl(b) = \inr(c)$.
In particular, if $S(x)$ is a type family like $\inl(b_0) = x$ for a fixed $b_0 : B$,
	then transport in $S$ corresponds to path composition.
\end{remark}

\section{Connectivity estimates}\label{connectivity}

The zigzag construction produces a sequence of approximations to pullbacks,
e.g.\ the sequence $P_0 \to P_1 \to \cdots$ provides a sequence of
approximations to $S \times_D A$. It is natural to ask how quickly this
sequence `converges', i.e.\ how well does $P_n$ approximate $S \times_D A$.
We obtain results in this direction from~\cref{zigzag-analysis}.
More formally, we can ask for a lower bound on the connectivity of the transfinite
composition $P_n \to S \times_D A$. Thus suppose our category $\mathcal E$
comes with a notion of $n$-connected map for $n \ge -2$.
This is the case e.g.\ if $\mathcal E$ is an elementary $\infty$-topos.
We follow the indexing convention of~\cite{abfj}, so that 
every map is $(-2)$-connected, and $(-1)$-connected maps are (essentially) surjective maps.
For us, only the following properties of $n$-connected maps are relevant:
\begin{enumerate}[(i)]
\item For each $n$, $n$-connected maps are closed under pushout.
\item For each $n$, $n$-connected maps are closed under pullback.
\item For each $n$, $n$-connected maps are closed under transfinite composition.
\item For each $n$, if $f : A \to B$ is $(n+1)$-connected then $\Delta f: A \to A \times_B A$ is
$n$-connected.
\item Join connectivity holds: if $X \to Z$ is $m$-connected and $Y \to Z$ is $n$-connected,
	then their fibrewise join $X \ast_Z Y \to Z$ is $(m+n+2)$-connected.
\end{enumerate}

Suppose that $\mathcal E$ has a notion of $n$-connected map satisfying the properties above,
as well as our usual assumption that $\mathcal E$ has pullbacks, 
pushouts that are universal and satisfy descent, and universal sequential colimits.
Let $Q_0 \leftarrow P_0 \to R_0$ be a span over $B \leftarrow A \to C$
with the right leg cartesian.
Write $S$ for $Q_0 \sqcup^{P_0} R_0$ and $D$ for $B \sqcup^A C$.
Suppose that the gap map $P_0 \to Q_0 \times_B A$ is $m$-connected,
that the diagonal $A \to A \times_B A$ is $k$-connected, and
that the diagonal $A \to A \times_C A$ is $l$-connected.
This happens in particular if $A \to B$ is $(k+1)$-connected and $A \to C$ is $(l+1)$-connected.
Then join connectivity means that $J_{2n} \to Z_{2n}$ is $(m + n(l+2) + (n-1)(k+2))$-connected
and $J_{2n+1} \to Z_{2n+1}$ is $(m + n(k+l+4))$-connected.
Thus we have similar connectivity estimates for $P_{n-1} \to P_n$,
$Q_{2n-2} \to Q_{2n}$ and $R_{2n-1} \to R_{2n+1}$,
and for transfinite compositions $P_n \to S \times_D A$,
$Q_n \to S \times_D B$, and $R_n \to S \times_D C$.
Note in particular that the connectivity of $J_n \to Z_n$ grows linearly with $n$,
provided that at least one of $k$ and $l$ is greater than $-2$.

Of particular interest is the case where $Q_0 \leftarrow P_0 \to R_0$ is $B
\leftarrow 0 \to 0$. In this case we take $m = -2$ and have that $J_3 \to Z_3$
is $(k+l+2)$-connected.
Thus $R_1 \to R_3$ is also $(k+l+2)$-connected, and the later maps $R_3 \to R_5$ and
so on are even more connected. In particular, the transfinite composition
$R_1 \to R_\infty$ is $(k+l+2)$-connected.
This is precisely the gap map $A \to B \times_D C$, so our connectivity estimates recover
the Blakers--Massey theorem.
In the same way one recovers the more general statement for modalities
proved by Anel et al.~\cite{abfj}.
In fact the map $J_3 \to Z_3$ is precisely the \emph{relative pushout product}
that appears in their Theorem 4.1.1.
For each $n > 3$, we have that $J_n \to Z_n$ is a fibrewise join where one of the maps is a pullback of $J_3 \to Z_3$,
so it in particular belongs to the left class of any modality containing $J_3 \to Z_3$.

We offer the following attempt at an informal explanation of the Blakers--Massey theorem.
The gap map $A \to B \times_D C$ can be thought of as including the space
of `edges' into the space of all possible zigzags modulo backtracking from $B$ to $C$.
The Blakers--Massey theorem describes the extent to which it is true that
every zigzag modulo backtracking is given by a unique single edge.
The assumption that $A \to A \times_B A$ is connected corresponds to assuming that
any two edges with a common endpoint in $B$ are, to some degree of approximation, the same.
Any zigzag from $B$ to $C$ has odd length, so either it is exactly one or it is at least $3$.
In the latter case, the zigzag has at least two pairs of adjacent edges, where one pair
shares an endpoint in $B$ and the other pair shares an endpoint in $C$.
Our assumption on the connectivity of diagonals means that each one of these pairs is, 
to some degree of approximation, a backtracking.
Join connectivity means that to an even higher
degree of approximation, our zigzag has a backtracking \emph{somewhere}, meaning that the zigzag is equivalent to
a shorter one. Iterating this reasoning, every zigzag from $B$ to $C$ is to a certain degree of approximation
equivalent to one of length one.

The assumption on connectivity of $A \to A \times_B A$ or $A \to A \times_C A$
is necessary to get any meaningful connectivity estimates for the zigzag
construction. Indeed \cref{circle} shows that it can happen that no vertical map
in the zigzag construction is even $(-1)$-connected.

\section{Monomorphisms and 0-truncated maps}\label{section:embeddings}

An important fact about pushouts is that a (homotopy) pushout of sets is 1-truncated, i.e.\ its 
loop spaces are sets.
This can be understood as a \emph{coherence theorem} for $\infty$-groupoids: it says that
the free $\infty$-groupoid on a graph happens to be a $1$-groupoid.
This is analogous to the coherence theorem for bicategories.
Indeed, classically any pushout of sets is a disjoint union of wedges of spheres.
In this section, we introduce and prove a generalisation of this coherence theorem.
This statement captures in an elegant way fundamental results from Bass--Serre theory.
This means that in particular we reconstruct non-trivial results from combinatorial group theory.
All our arguments are based on the zigzag construction and have no other combinatorial input.

\begin{definition}
We say a map $f : X \to Y$ is $(-2)$-truncated if it is an equivalence. For $n \ge -1$ we say $f$
is $n$-truncated if its diagonal $\Delta f : X \to X \times_Y X$ is $(n-1)$-truncated.
We say an object $X$ is $n$-truncated if $X \to 1$ is.
\end{definition}
We call a $(-1)$-truncated map, i.e.\ a map whose diagonal is an equivalence, a \emph{monomorphism}.%
\footnote{In homotopy type theory, monomorphisms are usually referred to as \emph{embeddings}.}
One can think of $0$-truncated maps as covering maps, up to homotopy, or as faithful
functors.
Classically, every monomorphism in spaces is complemented, 
i.e.\ a coproduct inclusion $X \to X \sqcup Y$,
and a map is $0$-truncated if it is injective
on each $\pi_1$ and an isomorphism on $\pi_2$ and above.
We sometimes call $0$-truncated objects \emph{sets}, following the terminology of homotopy type theory.

In any elementary $\infty$-topos, $n$-connected maps and $n$-truncated maps
form the left and right classes of a factorisation system. In this sense
truncated maps are dual to connected maps. We have that $n$-truncated maps are
closed under pullback but generally not under pushout. 
For example, $1 \to S^2$ is a pushout of $S^1 \to 1$, 
and the latter is $1$-truncated but the former is not $n$-truncated for
any $n$. In light of this, it is perhaps surprising that monomorphisms \emph{are} closed under pushout.

\begin{lemma}\label{pushout-embedding}
Suppose $\mathcal E$ is an $\infty$-category with pullbacks, 
and pushouts that are universal and satisfy descent.
Suppose given a pushout square
\[
\begin{tikzcd}	
A \arrow[r, hook, "f"] \arrow[d] & C \arrow[d]\\
B \arrow[r, "g"] & D
\arrow[ul, phantom, "\ulcorner" , very near start]
\end{tikzcd}	
\]
in which the map $f$ is an monomorphism.
Then:
\begin{enumerate}[(i)]
\item $g$ is an monomorphism.
\item The square is cartesian.
\item The pullback $C \times_D C$ fits into the following pushout square.
\[
\begin{tikzcd}	
A \arrow[r, hook, "f"] \arrow[d] & C \arrow[d]\\
A \times_B A \arrow[r] & C \times_D C
\arrow[ul, phantom, "\ulcorner" , very near start]
\end{tikzcd}	
\]
\end{enumerate}
\end{lemma}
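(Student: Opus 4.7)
The plan is to apply the zigzag construction from \cref{zigzag-construction} to two suitably chosen starting spans and to invoke \cref{main-theorem}. In both cases the embedding hypothesis on $f$, which says exactly that $A \simeq A \times_C A$, will cause the zigzag to stabilise after its first non-trivial step and yield closed-form descriptions of the pullbacks in question.

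For parts (i) and (ii), I would apply the zigzag to the span $B \leftarrow 0 \to 0$, whose pushout is $S = B$, so that \cref{main-theorem} identifies the colimits as $Q_\infty \simeq B \times_D B$ and $R_\infty \simeq B \times_D C$. The first step (making the left leg cartesian) produces the span $B \leftarrow A \to A$ with the right leg the identity on $A$. Because $f$ is an embedding, this identity map is cartesian over $f$, so the right-leg step is idempotent by \cref{idempotent}; the left leg $A \to B$ is trivially cartesian over itself, so the left-leg step is idempotent as well. Hence the sequence of spans is constant from step $1$ onward, yielding $Q_\infty \simeq B$ and $R_\infty \simeq A$. Chasing the canonical structure maps shows that these equivalences are the diagonal $B \to B \times_D B$ and the cartesian gap map $A \to B \times_D C$, proving (i) and (ii) respectively.

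For (iii), I would apply the zigzag to the span $A \leftarrow A \to C$ in which $P_0 \to Q_0$ is the identity on $A$ and $P_0 \to R_0$ is $f$; its pushout is $S = C$, so that \cref{main-theorem} identifies $R_\infty \simeq C \times_D C$. The right leg is already cartesian over $f$ by the embedding hypothesis, so the first non-trivial operation is making the left leg cartesian; this produces $P_1 = A \times_B A$ and $R_1 = (A \times_B A) \sqcup_A C$, exactly the pushout appearing in the statement. The next step asks whether the induced map $P_1 \to R_1 \times_C A$ is an equivalence. Pulling the pushout defining $R_1$ back along $f$ and using universality of pushouts together with $A \times_C A \simeq A$, one computes $R_1 \times_C A \simeq A \times_B A$ with the induced comparison an equivalence, so the zigzag stabilises again from step $1$. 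Thus $R_\infty \simeq R_1 \simeq (A \times_B A) \sqcup_A C$, and tracing the canonical maps shows that this identifies with the claimed pushout description of $C \times_D C$.

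The main obstacle will be the bookkeeping in the step-$2$ computation for (iii): one has to track maps, not just objects, through the universality argument computing $((A \times_B A) \sqcup_A C) \times_C A$, verifying that the induced comparison $P_1 \to R_1 \times_C A$ really is an equivalence via the identification with $A \times_B A$. Once this is pinned down, the stabilisation arguments reduce to routine pullback-pasting together with \cref{idempotent}, and the identification of the colimits with the claimed pullbacks is immediate from \cref{main-theorem}.
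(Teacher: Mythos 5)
Your proof is correct, and at the top level it follows the paper: both arguments run the zigzag construction on the spans $B \leftarrow 0 \to 0$ and $A \leftarrow A \to C$ and read off the identifications $Q_\infty \simeq B \times_D B$, $R_\infty \simeq B \times_D C$ (resp.\ $R_\infty \simeq C \times_D C$) from \cref{main-theorem}. Where you diverge is in the stabilisation mechanism. The paper invokes \cref{zigzag-analysis}: for $n \ge 2$ each $J_n \to Z_n$ is a fibrewise join one of whose factors is a pullback of the equivalence $\Delta f : A \to A \times_C A$, hence is itself an equivalence, so every vertical map from stage $2$ onward is an equivalence. You instead observe that the stage-$1$ span already has \emph{both} legs cartesian --- the left leg by construction, the right leg precisely because $\Delta f$ is an equivalence --- and conclude by \cref{idempotent} that the sequence is constant from stage $1$ on. For part (iii) this requires the small universality computation you flag, and it does go through: $R_1 \times_C A \simeq \bigl((A \times_B A) \times_C A\bigr) \sqcup^{A \times_C A} (C \times_C A)$, and since $(A \times_B A) \times_C A \simeq (A \times_B A) \times_A (A \times_C A) \simeq A \times_B A$ while $A \times_C A \simeq A \simeq C \times_C A$, the pushout collapses to $A \times_B A$ with the gap map the identity. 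Your route is slightly more self-contained for this lemma, needing only \cref{making-cartesian}, \cref{idempotent}, and \cref{main-theorem} rather than the $J_n \to Z_n$ analysis of \cref{pushout-step}; the paper's route has the advantage of being exactly the argument that generalises to \cref{0-truncated-pushout}, where the legs never become cartesian at any finite stage but the maps $J_n \to Z_n$ remain embeddings. Both are valid.
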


Informally, part (iii) expresses that two elements of $C$ are identified in $D$ if and only if either
they are already identified in $C$, or they are both in the image of $f$ and their preimages are
identified in $B$.

The first two statements are proved directly using descent
in~\cite{abfj}, and all of them are proved 
in~\cite[Proposition A.10]{partial-univalence}. 
We present a proof based on the zigzag construction.
\begin{proof}[Proof of \cref{pushout-embedding}]
Consider the zigzag construction starting from $B \leftarrow 0 \to 0$.
In this case $J_2 \to Z_2$ is precisely the diagonal 
$\Delta f : A \to A \times_C A$.
Since $f$ is a monomorphism, $J_2 \to Z_2$ is an equivalence.
Since equivalences are closed under pushout,
the whole map of spans from $Q_1 \leftarrow P_1 \to R_1$ to
$Q_2 \leftarrow P_2 \to R_2$ is an equivalence.
Thus the span $Q_1 \leftarrow P_1 \to R_1$ already has both
legs cartesian over $B \leftarrow A \to C$.
As in the proof of \cref{main-theorem}, this means that
$Q_1 \leftarrow P_1 \to R_1$ is equivalent to
$B \times_D B \leftarrow B \times_D A \to B \times_D C$.
In particular we have $B \simeq B \times_D B$ and
$A \simeq B \times_D C$.

For part (iii), we instead start from $A \leftarrow A \to C$.
We have $J_2 \simeq Z_2$ in this case as well so that $R_1 \simeq R_\infty$.
This gives the desired third pushout square.
\end{proof}


Note that if in \cref{pushout-embedding} $f$ is a \emph{complemented} monomorphism, i.e.\ $A \to A
\sqcup X$, then so is $g$. Namely, it is $B \to B \sqcup X$. In any
elementary $\infty$-topos, monomorphisms and complemented monomorphisms enjoy another good closure property
normally satisfied by the \emph{left} class of a factorisation system: they
are closed under fibrewise join~\cite{join}.
Assuming sequential colimits commute with pullback, we also have that
monomorphisms are closed under transfinite composition~\cite{sequential}.%
\footnote{To see this, note that given a sequence
	$A_0 \to A_1 \to A_2 \to \cdots$, the diagonal
	of its transfinite composition $A_0 \to A_\infty$
	is the transfinite composition of
	$A_0 \to A_0 \times_{A_1} A_0 \to A_0 \times_{A_2} A_0 \to \cdots$.
If all the maps in the original sequence are monomorphisms, then
all the maps in the latter sequence are equivalences.}
This gives the following result.
\begin{theorem}\label{0-truncated-pushout}
Let $\mathcal E$ be an $\infty$-category with pullbacks,
pushouts that are universal and satisfy descent, and sequential colimits that
commute with pullbacks.
Suppose moreover that monomorphisms in $\mathcal E$ are closed under fibrewise join.
Let 
\[
\begin{tikzcd}	
A \arrow[r, "g"] \arrow[d,"f"] & C \arrow[d, "k"]\\
B \arrow[r, "h"] & D
\arrow[ul, phantom, "\ulcorner" , very near start]
\end{tikzcd}	
\]
be a pushout square where $f$ and $g$ are both 0-truncated.
Then $h$ and $k$ are also $0$-truncated, and the gap map $A \hookrightarrow B \times_D C$ is
a monomorphism.
\end{theorem}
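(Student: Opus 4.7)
The plan is to apply \cref{zigzag-construction} to the span
$B \leftarrow 0 \to 0$, whose pushout is $S = B$, and to prove that every auxiliary
map $J_n \to Z_n$ appearing in \cref{zigzag-analysis} is an embedding. Given this,
the conclusion will follow by assembling the closure properties of embeddings under
pushout, pullback, and transfinite composition.

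First I would observe that since $f$ and $g$ are $0$-truncated, the diagonals
$A \to A \times_B A$ and $A \to A \times_C A$ are embeddings. The description given
immediately after \cref{zigzag-analysis} presents each $J_n \to Z_n$ as a fibrewise
join of pullbacks of these two diagonals; the component coming from $P_0 \to P_1$
is absent since $P_0 = 0$. Because embeddings are $(-1)$-truncated they are closed
under pullback, and by hypothesis they are closed under fibrewise join, so each
$J_n \to Z_n$ is indeed an embedding.

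From here the rest is assembly. By \cref{pushout-embedding} a pushout of an embedding
is again an embedding, and since truncated maps are closed under transfinite composition,
the transfinite compositions $Q_0 \to Q_\infty$ and $R_1 \to R_\infty$ are both embeddings.
By \cref{main-theorem} these two maps can be identified with the diagonal
$B \to B \times_D B$ of $h$ and with the gap map $A \to B \times_D C$ respectively;
the identification is a direct check, using that the first non-trivial stage of the
construction produces $P_1 = R_1 = A$ with its evident maps $f$ to $B$ and $g$ to $C$.
Running the same argument with the roles of $B$ and $C$ interchanged (equivalently,
using the symmetric version of the construction that first makes the right leg cartesian)
shows analogously that $C \to C \times_D C$ is an embedding, so $k$ is $0$-truncated.

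The single non-routine step is the identification of $J_n \to Z_n$ as a fibrewise join
of pullbacks of the diagonals of $f$ and $g$; everything after that is bookkeeping
with closure properties of embeddings.
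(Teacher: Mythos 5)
Your proposal is correct and follows essentially the same route as the paper's proof: run the zigzag construction on $B \leftarrow 0 \to 0$, observe that each $J_n \to Z_n$ is a fibrewise join of pullbacks of the diagonals of $f$ and $g$ (hence an embedding), and conclude via closure of embeddings under pushout and transfinite composition, with the symmetric argument for $k$. You merely spell out steps the paper leaves implicit (the explicit citation of \cref{pushout-embedding} and the identification of $Q_0 \to Q_\infty$ and $R_1 \to R_\infty$ with the diagonal of $h$ and the gap map), all of which check out.
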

\begin{proof}
Consider the zigzag construction starting from $B \leftarrow 0 \to 0$.
Since monomorphisms are closed under pullback and fibrewise join, we have that
$J_n \to Z_n$ is a monomorphism for all $n \ge 1$, and so all the vertical maps in the zigzag construction
are monomorphisms. In particular $B \to B \times_D B$ is an monomorphism, i.e.\ $h$ is $0$-truncated,
	and $A \to B \times_D C$ is a monomorphism. The claim that $k$ is $0$-truncated is symmetric
	(i.e.\ proved by starting the zigzag construction from $0 \leftarrow 0 \to C$).
\end{proof}
Note that the same proof shows that \cref{0-truncated-pushout} also holds if we replace
`monomorphism' with `complemented monomorphism' and `$0$-truncated map' with
`map whose diagonal is a complemented monomorphism'.

A prototypical example of \cref{0-truncated-pushout} is as follows.
Let $G$, $H$, $N$ be groups and suppose we have injective group homomorphisms $N \to G$ and $N \to H$.
Then the deloopings $BN \to BH$ and $BN \to BG$ are $0$-truncated.
\cref{0-truncated-pushout} says that the pushout in spaces is 1-truncated; in fact it
is the delooping of the amalgamated product, $B(G \ast_N H)$. The fact that
the maps into the pushout are $0$-truncated means that $G$ and $H$ are subgroups of
$G \ast_N H$.
The fact that the gap map is a monomorphism means that $N$ is the intersection of $G$ and $H$
in $G \ast_N H$ (since taking loop spaces preserves pullback squares and
turns monomorphisms into equivalences).

More generally, any \emph{graph of groups} determines a span of 0-truncated maps.
Concretely, suppose $V$ and $E$ are sets, with maps $s, t : E \to V$,
and suppose we have for each $v \in V$ a group $G_v$ and for each $e : E$ a group
$H_e$ with injective group homomorphisms $H_e \to G_{s(e)}$ and $H_e \to G_{t(e)}$.
Let $B$ be the disjoint union $\bigsqcup_{v \in V} BG_v$ of deloopings, and
let $C$ be the disjoint union $\bigsqcup_{e \in E} BH_e$. Let $A = C \sqcup C$
with $A \to C$ the codiagonal and $A \to B$ given by $s$ on the first component and $t$ on the second.
Note that $A \to C$ is 0-truncated since all fibres are of the form $1 + 1$, and
$A \to B$ is 0-truncated since the group homomorphisms
$H_e \to G_{s(e)}$, $H_e \to G_{t(e)}$ are injective.
The pushout $B \sqcup^A C$ is the realisation of the fundamental groupoid of the graph of groups.
Equivalently, this is a coequaliser of $C \rightrightarrows B$, i.e.\ $\bigsqcup_{e \in E} BH_e \rightrightarrows \bigsqcup_{v \in V} BG_v$.
We give a more explicit description of the pushout in \cref{groupoid}.

\subsection*{The free higher group on a set in homotopy type theory}

The zigzag construction is in part motivated by the following question from
HoTT: for a set $A$, is the coequaliser $A \rightrightarrows 1$ a 
1-truncated type? In the case where $A$ has decidable equality,
Brunerie formalised a proof already in
2012~\cite{dec-freegroup}. This later appeared as Exercise 8.2
of~\cite{hottbook}, with the general case mentioned as an open question.
Kraus and Altenkirch~\cite{ka18} expressed the problem as describing the free
higher group on $A$. They also showed that the 1-truncation of the free higher group
is 0-truncated, using ideas about confluent rewriting, similar to the constructive proof
in~\cite[Chapter X]{bible} that a set embeds in its free group.
Christian Sattler proposed to understand the free higher group on a set by observing
that any set is a filtered colimit of finite sets, and using that suspension and
loop spaces commutes with filtered colimits.
The issue with this argument is that we have no known way to express colimits
of general diagrams in HoTT.

Another way to understand the problem is as follows. 
One might hope to describe the free higher group on a set by showing that its Cayley graph is a tree.
Given a graph $s, t : E \rightrightarrows V$ with $E$ and $V$ sets, 
we can classically say that $(V, E)$ is a tree if $V$ has at least one element
and any two elements of $V$ are joined by a unique undirected path without backtracking.
Trees also admit a homotopical description: a tree is a graph whose coequaliser is contractible.
Going from the first characterisation to the second is possible also in a constructive setting,
i.e.\ in any elementary $\infty$-topos or in homotopy type theory.
But the reverse is wildly false, essentially
because we cannot find enough paths without backtracking.
So we cannot in general hope to show that a coequaliser
is contractible by showing unique existence of reduced paths.
Instead, a reasonable constructive characterisation of trees is that they are graphs whose coequaliser
is connected and has trivial $\pi_1$, 
meaning that any cycle of positive length has a backtracking. 
Still, it is constructively a non-trivial fact that a tree in this sense has
contractible coequaliser.
It follows from the following theorem.
\begin{theorem}\label{pushout-truncated}
Working in homotopy type theory, consider a pushout square as follows.
\[
\begin{tikzcd}	
A \arrow[r, "g"] \arrow[d,"f"] & C \arrow[d, "k"]\\
B \arrow[r, "h"] & D
\arrow[ul, phantom, "\ulcorner" , very near start]
\end{tikzcd}	
\]
Suppose $f$ and $g$ are $0$-truncated and 
that $B$ and $C$ are both $n$-truncated with $n \ge 1$. Then $D$ is also $n$-truncated.
\end{theorem}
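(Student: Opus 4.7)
The plan is to combine the zigzag construction with an auxiliary lemma about pushouts along embeddings, both proved by induction on truncation level.

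\textbf{Reduction.} Since being $(n-1)$-truncated is a proposition, pushout induction on each endpoint reduces the claim that all path spaces of $D$ are $(n-1)$-truncated to the cases where both endpoints come from $B$ or $C$. These path spaces are the fibres of the three gap maps $B \times_D B \to B \times B$, $B \times_D C \to B \times C$, and $C \times_D C \to C \times C$, so it suffices to show each of these three maps is $(n-1)$-truncated; then all path spaces of $D$ are $(n-1)$-truncated, i.e., $D$ is $n$-truncated.

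\textbf{Zigzag step.} For $B \times_D B \to B \times B$, I apply the zigzag construction starting from $B \leftarrow 0 \to 0$, expressing $B \times_D B$ as the sequential colimit of $Q_{2m}$ with $Q_0 = B$. Since $(n-1)$-truncated maps are closed under sequential colimits, it suffices to show each $Q_{2m} \to B \times B$ is $(n-1)$-truncated, by induction on $m$. The base case $Q_0 \to B \times B = \Delta_B$ is $(n-1)$-truncated since $B$ is $n$-truncated. For the inductive step, $Q_{2m+2}$ is the pushout of $Q_{2m}$ along $J_{2m+2} \hookrightarrow Z_{2m+2}$, which is an embedding by closure of embeddings under fibrewise join (the hypothesis that $f, g$ are $0$-truncated makes the diagonals of $A$ over $B$ and over $C$ embeddings). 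Taking fibres over $(b_1, b_2)$ via universality of pushouts, the fibre of $Q_{2m+2}$ becomes a pushout of the fibre of $Q_{2m}$ ($(n-1)$-truncated by induction) and the fibres of $J_{2m+2}, Z_{2m+2}$ (both sets, as iterated pullbacks of the $0$-truncated maps $A \to B, A \to C$) along an embedding. The arguments for $B \times_D C$ and $C \times_D C$ are symmetric.

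\textbf{Auxiliary lemma.} The inductive step reduces to: if $J \hookrightarrow Z$ is an embedding and $J, Q, Z$ are all $k$-truncated, then $W := Q \sqcup^J Z$ is $k$-truncated (for $k \geq 0$). I prove this by induction on $k$, analysing path spaces in $W$. Paths between $Q$-points reduce to paths in $Q$, since $Q \to W$ is an embedding by \cref{pushout-embedding}. Paths between $Q$ and $Z$ come from the cartesian identification $Q \times_W Z \simeq J$, with fibres controlled by $i$ being an embedding. For paths between $Z$-points, \cref{pushout-embedding}~(iii) gives $Z \times_W Z \simeq Z \sqcup^J (J \times_Q J)$; taking fibres over $Z \times Z$, each is a pushout of three $(k-1)$-truncated types along the pullback of $J \hookrightarrow Z$, and the inductive hypothesis applies.

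\textbf{Main obstacle.} The principal difficulty is the base case $k = 0$ of the auxiliary lemma, which asserts that pushouts of sets along embeddings are sets. Since arbitrary pushouts of propositions are not propositions, the recursion breaks at this level and one needs a direct argument; fortunately, the three-proposition pushouts arising from the fibre computation all satisfy the structural property that whenever both outer propositions hold, so does the embedded middle one, which forces the pushout to be a proposition.
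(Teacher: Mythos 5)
Your proof is essentially correct, but it takes a far longer road than the paper and re-derives by hand what \cref{0-truncated-pushout} already packages. The paper's proof is four lines: being contractible is a proposition and every $d : D$ is merely of the form $h(b)$ or $k(c)$, so it suffices to show $\Omega^{n+1}(D, h(b))$ is contractible; since $h$ is $0$-truncated by \cref{0-truncated-pushout}, the map $\Omega(h)$ is an embedding, hence $\Omega^2(h)$ is an equivalence, so $\Omega^{n+1}(D, h(b)) \simeq \Omega^{n+1}(B, b)$, which is contractible because $B$ is $n$-truncated and $n+1 \ge 2$. The observation you miss is that for $n \ge 1$ one never needs the full path spaces $h(b_1) = h(b_2)$, only their double loop spaces, and $0$-truncatedness of $h$ controls those completely. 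Your route instead computes the truncation level of the path spaces themselves by running the zigzag filtration fibrewise and inducting on truncation level, which forces the auxiliary lemma that a pushout of $k$-truncated types along an embedding is $k$-truncated. That lemma is true, and your sketch is essentially right: the real content is indeed the base case (pushouts of sets along a possibly non-complemented embedding are sets), and the resolution is that the three fibre propositions $P \leftarrow R \to S$ of $Z \times_W Z \to Z \times Z$ obtained from \cref{pushout-embedding}~(iii) satisfy $P \times S \to R$, so $R \simeq P \times S$ and the pushout is the join of $P$ and $S$, a proposition. What your approach buys is a strictly stronger conclusion, namely that the gap maps $B \times_D B \to B \times B$, $B \times_D C \to B \times C$, $C \times_D C \to C \times C$ are $(n-1)$-truncated, in the spirit of \cref{groupoid}. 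What it costs, besides length, is two points needing repair: (a) the fibres of $Z_{2m}$ and $J_{2m}$ over a point of $B \times B$ are \emph{not} sets in general --- the last leg of the iterated pullback contributes a path space of $B$ or $C$, so they are only $(n-1)$-truncated; fortunately that is exactly what your auxiliary lemma requires, so nothing breaks. (b) The claim that ``$(n-1)$-truncated maps are closed under sequential colimits'' here means that a sequential colimit of $(n-1)$-truncated objects over a fixed base is again $(n-1)$-truncated, which needs the preservation of truncation levels by sequential colimits from \cite{sequential}, not merely closure of truncated maps under transfinite composition.
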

\begin{proof}
Given an element $d : D$, we have to show that the iterated loop space
$\Omega^{n+1}(D, d)$ is contractible.
Being contractible is a property, and $x$ is \emph{merely} of the form $h(b)$ or $k(c)$,
so we may without loss of generality suppose it is of the form $h(b)$.
Now since $h$ is 0-truncated by \cref{0-truncated-pushout}, it is an equivalence on second loop spaces,
	$\Omega^2(h) : \Omega^2(B, b) \simeq \Omega^2(D, h(b))$.
In particular we have $\Omega^{n+1}(B, b) \simeq \Omega^{n+1}(D, h(b))$.
The former is contractible since $B$ is $n$-truncated, so we are done.
\end{proof}
Note that any map of sets is $0$-truncated, so \cref{pushout-truncated} in particular 
shows that any pushout or coequaliser or suspension of
0-truncated types is 1-truncated.

\section{Taking fibres}\label{taking-fibres}
Say again we are given a pushout square as follows.
\[
\begin{tikzcd}	
A \arrow[r,"g"] \arrow[d,"f"] & C \arrow[d,"k"]\\
B \arrow[r,"h"] & D
\arrow[ul, phantom, "\ulcorner" , very near start]
\end{tikzcd}	
\]
The pullback $B \times_D C$ describes, as an object over $B \times C$, a family of \emph{path spaces}
of $D$. More precisely, suppose we have points $b : 1 \to B$ and $c : 1 \to C$.
Following the notation of homotopy type theory, write
$h(b) = k(c)$ for the pullback of $h \circ b$ and $k \circ c$ -- the space of paths
from $h(b)$ to $k(c)$.
Then $h(b) = k(c)$ is the fibre of $B \times_D C$ over $(b, c) : 1 \to B \times C$.
The zigzag construction starting from
$B \leftarrow 0 \to 0$
describes $B \times_D C$ as the sequential colimit of $R_1 \to R_3 \to \cdots$, and
since sequential colimits commute with pullbacks,
$h(b) = k(c)$ is the sequential colimit of the sequence of fibres
$R_1(b,c) \to R_3(b,c) \to \cdots$.
Here $R_1(b,c)$ is the fibre $A(b,c)$ of $A$ over $(b,c)$.
We can also pull back the pushout square from \cref{zigzag-analysis} along $(b,c)$ to
obtain the following pushout square, for $n$ odd.
\[
\begin{tikzcd}	
J_n(b,c) \arrow[r] \arrow[d] & R_{n-2}(b,c) \arrow[d]\\
Z_n(b,c) \arrow[r] & R_n(b,c)
\arrow[ul, phantom, "\ulcorner" , very near start]
\end{tikzcd}	
\]
Remarkably, this gives a description of the path space $h(b) = k(c)$ that is \emph{independent}
of the description of other path spaces; we can study each path space in isolation.
The fibre $Z_n(b,c)$ of $Z_n$ over $(b, c)$ is an iterated pullback
$b \times_B A \times_C A \cdots A \times_C c$ -- the space of length $n$ zigzags that start
in $b$ and end in $c$.

A similar analysis applies if we take the fibre of $B \times_D B$ over a point of $B \times B$,
or the fibre of $B \times_D A$ over a point of $B \times A$.
In the latter case, given $b : 1 \to B$ and $a : 1 \to A$, consider
the fibre $Z_n(b, a)$ of $Z_n$ over $(b,a)$.
It is an iterated pullback $b \times_B A \times_C A \cdots A \times_A a$,
with $n$ copies of $A$. This is the space of zigzags of length exactly $n$ that start in $b$
and end in \emph{the edge} $a$.
This is equivalent to the iterated pullback like $b \times_B A \cdots A \times_B f(a)$ for $n$ odd,
or $b \times_B A \cdots A \times_C g(a)$ for $n$ even -- zigzags of length exactly $n-1$
whose endpoint agrees with that of $a$.
Under this latter description of $Z_n(b, a)$, one should be careful to note that
$J_n(b, a)$ is the space of zigzags of length $n-1$ starting in $b$ and ending in an endpoint of $a$,
where \emph{either} the zigzag has a backtracking in some position, \emph{or} the last edge is $a$
(equivalently, the zigzag obtained by appending $a$ has a backtracking).

\begin{example}
Suppose that $B$ and $C$ are the terminal object $1$, so that $D$ is the suspension
$\Sigma A$.
In the zigzag construction starting from $1 \leftarrow 0 \to 0$ over
$1 \leftarrow A \to C$,
we have that 
 $Z_n$ is $A^n$ and that $J_n$ is the join of $n-1$ maps $A^{n-1} \to A^n$, where each map
duplicates one coordinate, e.g.\ $(a_1, a_2, a_3) \mapsto (a_1, a_2, a_2, a_3)$.
Here an element like $(a_1, a_2, a_3, a_4)$ of $A^n$ should be thought of as representing a path
like $a_1 a_2^{-1} a_3 a_4^{-1}$.
The zigzag construction gives a description of the loop space $\Omega \Sigma A$,
without assuming that $A$ is connected or has a basepoint:
it is the colimit of $Q_0 \to Q_2 \to Q_4 \to \cdots$, where $Q_{n-2} \to Q_n$ is a pushout of $J_n \to Z_n$.
If we do assume that $A$ has a basepoint $a : 1 \to A$, we get a slightly more refined description,
by taking the colimit of $P_1(a) \to P_2(a) \to P_3(a) \to \cdots$.
Here $P_1(a)$ is $1$ and $P_{n-1}(a) \to P_n(a)$ is a pushout of $J_n(a) \to Z_n(a)$, 
where $Z_n(a) = A^{n-1}$ and $J_n(a)$ is the fibrewise join of $n$ maps $A^{n-2} \to A^{n-1}$:
the $n-1$ maps that duplicate a coordinate, and the map which inserts $a$ in the last coordinate.
\end{example}

\begin{example}\label{circle}
Consider the suspension of a two-element set $A \coloneqq \{a, b\}$.
This suspension is a circle, so the loop space should be the integers. More precisely,
the suspension is equivalent to a circle provided that we choose a basepoint of $A$.
Consider the zigzag construction starting from $1 \leftarrow 0 \to 0$
over $1 \leftarrow A \to 1$.
Now $Z_n$ is simply the product $A^n$.
$J_n$ is the subset of $A^n$ consisting of lists where
some two adjacent elements are equal. The monomorphism $J_n \to A^n$ 
is complemented, and its complement consists of two elements
$(a, b, a,\cdots)$ and $(b, a, b, \cdots)$.
Thus all the vertical maps in the zigzag construction
simply add two new points.
The loop space $Q_\infty$ thus consists of words like
$1$, $ab^{-1}$, $ba^{-1}$, $ab^{-1}ab^{-1}$, $ba^{-1}ba^{-1}$, etc. 
These are precisely integer powers of $ab^{-1}$.
The path space $R_\infty$ consists of words like
$a$, $b$, $ab^{-1}a$, $b a^{-1} b$, etc.
The pullback $J_n(a) \to Z_n(a)$ of $J_n \to Z_n$ along $\{a\} \hookrightarrow \{a,b\}$
is also complemented, and its complement is just one point.
Thus in the sequence $P_1(a) \to P_2(a) \to P_3(a) \to \cdots$, each map adds just one new
point.
\end{example}

\begin{example}\label{groupoid}
Let us give a more precise description of what happens in \cref{0-truncated-pushout}
in the case where $A$, $B$, $C$ are groupoids.
In this case $f : A \to B$ and $g : A \to C$ are \emph{faithful} functors.
Given objects $b$ and $c$ of $B$ and $C$, we give a description of
$Z_3(b, c)$, i.e.\ $b \times_B A \times_C A \times_B A \times_C c$; 
one can obtain a similar description of $Z_n(b, c)$ for general $n$.
It turns out that $Z_3(b,c)$ is a set.
Explicitly, an element of $Z_3(b, c)$ can be represented by three objects $a_1$, $a_2$, $a_3$ of 
$A$ together with isomorphisms $p_1 : b \cong f(a_1)$, 
$p_2 : g(a_1) \cong g(a_2)$, $p_3 : f(a_2) \cong f(a_3)$, $p_4 : g(a_3) \cong c$.
However this representation is not exactly unique:
e.g. if $q : a_1 \cong a_1'$, then replacing $a_1$ with $a_1'$,
$p_1$ with $f(q) \circ p_1$ and $p_2$ with $p_2 \circ g(q)$
results in the same element of $Z_3$. 
It is the faithfulness of $f$ and $g$ that ensures that this set quotient 
is actually a homotopy quotient, so that $Z_3(b,c)$ is a set
(or more precisely, a thin groupoid).
Describing $J_3(b,c)$ is easier: it is the subset of $Z_3(b,c)$ consisting of those
elements where at least one of $p_2$, $p_3$, and $p_4$ lies in the image of $f$ or $g$
(again this is a property since $f$ and $g$ are faithful).

The homset $h(b) \cong k(c)$ of the pushout groupoid $D$ is simply the union of
a sequence of subsets $R_1(b,c) \hookrightarrow R_3(b,c) \hookrightarrow \cdots$.
This sequence is described quite explicitly by bicartesian squares, for $n \ge 3$ odd:
\[
\begin{tikzcd}	
J_n(b,c) \arrow[r] \arrow[d, hook] 
\arrow[dr, phantom, "\lrcorner" , very near start]
& R_{n-2}(b,c) \arrow[d, hook]\\
Z_n(b,c) \arrow[r] & R_n(b,c)
\arrow[ul, phantom, "\ulcorner" , very near start]
\end{tikzcd}	
\]
If $J_n(b,c) \hookrightarrow Z_n(b,c)$ is complemented (classically, a redundant assumption),
then this says that $R_n(b,c)$ is the union of $R_{n-2}(b,c)$ with the complement
$Z_n(b,c) \setminus J_n(b,c)$ -- the set of \emph{reduced} zigzags.
This can be compared with Higgins' description of the fundamental groupoid of a graph of 
groups~\cite{higgins}.
If we do not assume $J_n \to Z_n$ is complemented, we at least obtain the 
following characterisation of equality in $h(b) \cong k(c)$, using
\cref{pushout-embedding}:
two elements of $Z_n(b,c)$ define the same element of $h(b) \cong k(c)$ if and only if
either they are equal in $Z_n(b, c)$, or they both reduce
and the two reductions in $Z_{n-2}(b,c)$ define the same element of $h(b) \cong k(c)$.
\end{example}
\begin{example}
\cref{groupoid} gives a basepoint-free and constructive description of the fundamental groupoid
of any graph of groups.
For concreteness, we explain the special case of an HNN extension.
In this case, we start from a parallel pair of injective group homomorphisms
$\alpha, \beta : H \hookrightarrow G$.
The HNN extension $G \ast_H$ is a group with an injective group homomorphism $G \to G \ast_H$
and an element $t \in G \ast_H$ such that $t\alpha(h)t^{-1} = \beta(h)$ for $h \in H$.
It is defined by the following pushout square, 
where we can view $BG$ as the one-object groupoid with automorphism group $G$.
\[
\begin{tikzcd}	
BH \sqcup BG \arrow[r,"{\lfloor \beta , \id \rfloor}"] \arrow[d,"{\lfloor \alpha , \id \rfloor}"] 
& BG \arrow[d]\\
BG \arrow[r] & B(G\ast_H)
\arrow[ul, phantom, "\ulcorner" , very near start]
\end{tikzcd}	
\]
We can consider $BH \sqcup BG$ to be pointed by the basepoint of $BG$ and consider
the fibres of $P_1 \to P_2 \to \cdots$ over the corresponding point $(b,a)$ of $BG \times (BH \sqcup BG)$.
We then have the following description of $Z_n(b,a)$.
An element is represented by a word of length $n$, where the first letter is an element of $G$,
and every other letter is either $1$ or of the form $t^{\pm 1} g$ where $g \in G$ and the 
exponent of $t$ depends on the parity of the position of the letter.
Say the exponent is $+1$ if the parity of the position is even.
Two such words represent the same element of $Z_n(b,a)$ if they are related by moves like
$g t \alpha(h) g' = g \beta(h) t g'$ and
$g t^{-1} \beta(h) g' = g \alpha(h) t^{-1} g'$.
Such a word \emph{reduces}, i.e.\ defines an element of $J_n(b, a)$ 
if either it contains to successive letters $1$,
or it ends in a $1$, or it contains a pattern
$t \alpha(h) t^{-1}$ or a pattern $t^{-1} \beta(h) t$.
\end{example}

Note that while the above description of HNN extensions is obtained
mechanically from the zigzag construction, it is also in some sense more
complicated than the general zigzag construction. This complexity comes from
the explicit unfolding of what a homotopy pullback is, and from the specific
span whose pushout is $B(G \ast_H)$. This is all hidden in the general picture.

One could say that the zigzag construction is a refinement of the (groupoidal)
Seifert--van Kampen theorem. Whereas the Seifert--van Kampen theorem describes
the fundamental groupoid of a pushout, the zigzag construction roughly speaking
describes the fundamental $\infty$-groupoid. But even aside from this,
\cref{groupoid} shows that the zigzag construction gives a useful description
of the fundamental groupoid, at least in some cases.

\section{The loop space of a wedge}\label{loop-wedge}

The zigzag construction admits a particularly nice description
in the case of a wedge. The Hilton--Milnor theorem
describes the loop space of a wedge of suspensions of pointed connected spaces~
\cite{hilton-milnor,lavenir}.
Using the zigzag construction we can say something interesting also for a 
general wedge. In particular we will show that it splits after suspension.

\begin{definition}
Given an object $I$ and an $I$-indexed family $(A(i))_{i \in I}$ of pointed objects, the wedge
$\bigvee_{i \in I} A(i)$ is given by the following pushout.
\[
\begin{tikzcd}	
I \arrow[r] \arrow[d] & \bigsqcup_{i \in I} A(i) \arrow[d]\\
1 \arrow[r] & \bigvee_{i \in I} A(i)
\arrow[ul, phantom, "\ulcorner" , very near start]
\end{tikzcd}	
\]
\end{definition}

\begin{remark}
We should read $I$ as an object of $\mathcal E$ in the above definition.
This means that $A$ is defined by an object over $I$ 
(namely $\bigsqcup_{i \in I} A(i)$) with a section. 
For simplicity we the same notation one would normally use
for actual, external sets $I$.
For example, $\bigsqcup_{i \in I} \Omega A(i)$ denotes
the loop space object of $A$ \emph{in} the slice $\mathcal E/I$.
This is in line with homotopy type theory, where one would write
$\Sigma_{i : I} A(i)$ or $(i : I) \times A(i)$ in place of $\bigsqcup_{i \in I} A(i)$.
This notation makes implicit some stability under taking pullbacks:
if $f : J \to I$, then $\bigsqcup_{j \in J} \Omega A(f(j))$ can be defined
either by pulling back $A$ along $f$ to get a pointed object over $J$ and
then taking loops in $\mathcal E/J$, or as first taking loops in $\mathcal E/I$
and then pulling back.
If $I$ is an external set and $\mathcal E$ has
universal $I$-indexed coproducts, then there is also a corresponding internal $0$-truncated
object of $\mathcal E$: the coproduct of $I$-many copies of $1$.
\end{remark}

Now suppose $I$ is an object and $A$ is an $I$-indexed family of pointed objects.
Consider the zigzag construction applied to the pushout square defining the wedge
$\bigvee_{i \in I} A(i)$, starting from the span $1 \leftarrow 0 \to 0$
over $1 \leftarrow I \to \bigsqcup_{i \in I} A(i)$.
We are particularly interested in the sequence $Q_0 \to Q_2 \to \cdots$
whose colimit is the loop space $\Omega \bigvee_{i \in I} A(i)$.
It is worth keeping in mind that this loop space should be the free product,
or coproduct in the category of $\infty$-groups, of the loop spaces
$\Omega A(i)$, although we do not treat this idea formally.

Consider the map $J_{2n} \to Z_{2n}$.
We have that $Z_{2n}$ is an iterated pullback
$I \times_{\bigsqcup_{i \in I} A(i)} I \times_1 \cdots$.
The pullback $I \times_{\bigsqcup_{i \in I} A(i)} I$ is 
the relative loop space
$\bigsqcup_{i \in I} \Omega A(i)$.
Pullback over $1$ is simply product, so $Z_{2n}$ is an $n$-fold product
$(\bigsqcup_{i \in I} \Omega A(i))^n$, or equivalently
$\bigsqcup_{(i_1\ldots i_n)\in I^n} \Omega A(i_1) \times \cdots \times \Omega A(i_n)$.

Let us now describe $J_{2n}$. It is a join of $2n-1$ maps. Of these, $n$
are pullbacks of $I \to \bigsqcup_{i \in I} \Omega A(i)$, a kind of relative basepoint
inclusion, and
the other $n-1$ are pullbacks of the diagonal $I \to I \times I$.
The first kind of map expresses the possibility that given a word
$w = a_1 a_2 a_3 \cdots a_j \cdots $ with $a_j \in \Omega A_{i_j}$, 
it could be that $w$ reduces because some letter $a_j$ is trivial.
The second kind expresses the possibility that $i_j = i_{j+1}$, so
that $w$ reduces because $a_j$ and $a_{j+1}$ can be multiplied in $\Omega A_{i_j}$.

Now suppose that $I$ is 0-truncated and that the diagonal $I \to I \times I$ is complemented,
a classically redundant assumption. In this case, assuming coproducts are universal, $I^n$ decomposes as
a coproduct $E_n \sqcup U_n$, where $E_n$ consists of lists where
some adjacent elements are equal and $U_n$ is the complement.
This decomposition of $I^n$ induces a decomposition
$J_{2n} \simeq (J_{2n} \times_{I^n} E_n) \sqcup (J_{2n} \times_{I^n} U_n)$
by universality of coproducts.
We claim that one component in this decomposition is trivial: 
$J_{2n} \times_{I^n} E_n \to Z_{2n} \times_{I^n} E_n$ is an equivalence.
This is because, by definition of $E_n$, over $E_n$, 
the fibrewise join of the $n-1$ pullbacks of $I \to I \times I$ is an equivalence.
On the other hand, over $U_n$, $J_{2n}$ these $n-1$ pullbacks of $I \to I \times I$
are all empty, and $J_{2n}$ simply looks like a relative fat wedge.

In this way we see that $J_{2n} \to Z_{2n}$ and hence $Q_{2n-2} \to Q_{2n}$ is a pushout
of the following map, a relative fat wedge inclusion.
\begin{equation} \label{disj-fw}
\bigsqcup_{(i_1\ldots i_n) \in U_n} \fw(\Omega A_{i_1}\ldots \Omega A_{i_n})
	\rightarrow \bigsqcup_{(i_1\ldots i_n) \in U_n} \Omega A_{i_1} \times \cdots \times \Omega A_{i_n}
	\end{equation}
Here, for $X_1 \cdots X_n$ pointed objects, $\fw(X_1\ldots X_n)$ denotes the
fat wedge, the pushout product of basepoint inclusions
$1 \to X_i$. The map in \eqref{disj-fw} is really a relative fat wedge, computed
in the slice $\mathcal E/U_n$.
An important fact about fat wedge inclusions is that they split after suspension.
We say a map $f : X \to Y$ of pointed objects \emph{splits} if 
it is a pushout of
a basepoint inclusion $1 \to Z$
(along the basepoint $1 \to X$); in this case $Z$ is necessarily the cofibre of $f$.
Our goal for the rest of this section is to show that the suspension of the map \eqref{disj-fw}
splits. To this end, we first have to prove some results that are more or 
less well-known in a classical setting.
See \cite{polyhedral} for general splitting results for polyhedral products.

We use the following lemma without proof. It is proved diagrammatically in~\cite{james-splitting}.

\begin{lemma}\label{suspension-product}
Let $\mathcal E$ be an $\infty$-category with finite products and pushouts.
If $A$ and $B$ are pointed objects, then we have
$\Sigma (A \times B) \simeq \Sigma A \vee \Sigma B \vee \Sigma (A \smsh B)$
naturally in $A$ and $B$.
\end{lemma}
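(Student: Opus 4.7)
The plan is to suspend the defining cofibre sequence of the smash product and then split the resulting cofibre sequence using the co-H-space structure carried by any suspension.

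First, I would recall that $A \smsh B$ is defined as the cofibre of the wedge inclusion $\iota : A \vee B \to A \times B$. Since $\Sigma$ is itself a pushout functor on pointed objects, it preserves pushouts, and in particular $\Sigma(A \vee B) \simeq \Sigma A \vee \Sigma B$ by pushout pasting. Applying $\Sigma$ to the defining cocartesian square of the smash product produces a cofibre sequence
\[
\Sigma A \vee \Sigma B \xrightarrow{\Sigma \iota} \Sigma(A \times B) \xrightarrow{\Sigma q} \Sigma(A \smsh B),
\]
and the goal becomes to exhibit this as split.

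Second, I would construct a retraction $\rho : \Sigma(A \times B) \to \Sigma A \vee \Sigma B$ of $\Sigma \iota$ using the pinch map $\mu : \Sigma(A \times B) \to \Sigma(A \times B) \vee \Sigma(A \times B)$, setting $\rho \coloneqq (\Sigma \pi_A \vee \Sigma \pi_B) \circ \mu$, where $\pi_A$, $\pi_B$ are the product projections. Naturality of $\mu$ together with the co-unit laws of the co-H-space structure on a suspension then implies $\rho \circ \Sigma\iota \simeq \mathrm{id}$.

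Third, from the retraction $\rho$ I would manufacture a section $\sigma : \Sigma(A \smsh B) \to \Sigma(A \times B)$ of $\Sigma q$. The construction uses the co-H-group structure of $\Sigma(A \times B)$: form the ``difference'' $\mathrm{id} - \Sigma\iota \circ \rho$ by pinching, applying $\Sigma\iota \circ \rho$ with a co-inverse on the other factor, and folding. This difference becomes null-homotopic after precomposing with $\Sigma \iota$, so it factors through the cofibre $\Sigma(A \smsh B)$ to yield $\sigma$. The combined map $(\Sigma\iota, \sigma) : \Sigma A \vee \Sigma B \vee \Sigma(A \smsh B) \to \Sigma(A \times B)$ is then an equivalence with inverse assembled from $\rho$ and $\Sigma q$, and naturality in $A$ and $B$ is built into each constituent map.

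The main obstacle is making this ``co-H-space arithmetic'' rigorous in a general $(\infty,1)$-category with only finite products and pushouts (no descent, no stability). The diagrammatic proof in \cite{james-splitting} sidesteps working with co-H-group operations explicitly by instead exhibiting a pasting of cocartesian squares whose outer pushout can be identified in two different ways—once as $\Sigma(A \times B)$ and once as $\Sigma A \vee \Sigma B \vee \Sigma(A \smsh B)$—so that the equivalence drops out directly from pushout pasting without any need to invoke additive structure.
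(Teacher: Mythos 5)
The paper does not actually prove this lemma: it states ``We use the following lemma without proof. It is proved diagrammatically in~\cite{james-splitting}.'' So there is no in-paper argument to compare against, and the relevant question is whether your sketch would close the gap the paper deliberately leaves open. It would not, in the stated generality. The construction of the retraction $\rho$ and of the candidate section $\sigma$ via co-H-group arithmetic is fine as far as it goes (postcomposition distributes over the pinch sum by naturality of the fold map, and precomposition with $\Sigma\iota$ distributes because $\Sigma\iota$ is a suspended map, hence a co-H map). The genuine gap is in the last step, where you assert that $(\Sigma\iota,\sigma)$ and the map assembled from $\rho$ and $\Sigma q$ are mutually inverse. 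The composite in one direction, $\Sigma\iota\rho + \sigma\Sigma q \simeq \Sigma\iota\rho + (\mathrm{id} - \Sigma\iota\rho) \simeq \mathrm{id}$, does follow from co-H-group arithmetic. But the other direction requires $\Sigma q \circ \sigma \simeq \mathrm{id}$ and $\rho \circ \sigma \simeq 0$, and the arithmetic only yields these identities \emph{after precomposition with} $\Sigma q$ (e.g.\ $\rho\sigma\Sigma q = \rho(\mathrm{id}-\Sigma\iota\rho) \simeq 0$). Since a cofibre projection is not an epimorphism in the homotopy category, you cannot cancel $\Sigma q$ on the right. Classically one finishes by checking the map is a homology isomorphism and invoking a Whitehead-type theorem; neither homology nor a Whitehead theorem is available in an arbitrary $(\infty,1)$-category with only finite products and pushouts.

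You correctly diagnose this obstacle in your final paragraph and point to the diagrammatic pushout-pasting proof as the remedy, but that is precisely the content of the proof, and your sketch does not supply it. As written, the proposal establishes that $\Sigma A \vee \Sigma B$ is a retract of $\Sigma(A\times B)$ and produces a candidate complementary summand, but not the claimed equivalence.
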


\begin{lemma}\label{times-splits}
Let $\mathcal E$ be an $\infty$-category with finite products and pushouts.
Let $f : A \to B$ be a map of pointed objects that splits after suspension.
Then so does $f \times X : A \times X \to B \times X$
for any pointed object $X$, naturally in $X$.
\end{lemma}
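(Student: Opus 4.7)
The plan is to decompose $\Sigma(f \times X)$ as a wedge of three maps using \cref{suspension-product}, check that each splits, and observe that wedges of splittings split. Explicitly, applying \cref{suspension-product} to $A \times X$ and to $B \times X$ gives natural equivalences
\[ \Sigma(A\times X)\simeq \Sigma A\vee\Sigma X\vee\Sigma(A\smsh X), \qquad \Sigma(B\times X)\simeq \Sigma B\vee\Sigma X\vee\Sigma(B\smsh X), \]
under which $\Sigma(f \times X)$ corresponds to the wedge $\Sigma f \vee \id_{\Sigma X} \vee \Sigma(f\smsh X)$. That a wedge of splittings splits is routine pushout pasting: given basepoint-pushout inclusions $g_i\colon W_i \to W_i\vee Z_i$, their wedge is the basepoint-pushout inclusion $\bigvee W_i\to \bigvee W_i\vee\bigvee Z_i$.

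It thus suffices to check that each summand splits. The first does so by hypothesis; the identity on $\Sigma X$ trivially splits, with cofibre $1$. For the third, the strategy is to exploit the fact that $-\smsh X$ preserves pushouts and sends basepoint to basepoint. The latter is immediate from $1\smsh X\simeq 1$. The former follows from universality of pushouts in $\mathcal E$: the functor $-\times X$ is pullback along $X\to 1$ and so preserves pushouts, while $-\vee X$ and the quotient defining $-\smsh X$ are themselves pushouts. In particular, writing $\Sigma Y = 1 \sqcup^Y 1$, one obtains a natural equivalence $\Sigma Y\smsh X \simeq \Sigma(Y\smsh X)$, so applying $-\smsh X$ to the given splitting of $\Sigma f$ along $1\to Z$ yields a splitting of $\Sigma(f\smsh X)$ along $1\to Z\smsh X$.

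Naturality in $X$ is automatic, since \cref{suspension-product}, the wedge inclusions, and smashing are all natural in $X$. The main point requiring care is verifying that $-\smsh X$ preserves pushouts in our setting; I expect this to be a direct consequence of universality of pushouts combined with the pushout-theoretic definition of smash, with no surprises.
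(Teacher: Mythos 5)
Your proof is correct and follows essentially the same route as the paper's: both arguments pivot on \cref{suspension-product} and on distributing $({-}) \smsh X$ over the wedge decomposition $\Sigma B \simeq \Sigma A \vee \Sigma C$, the only difference being that you package the conclusion as a wedge of three split maps while the paper assembles a single pasting diagram of cocartesian squares. Your appeal to universality of pushouts to see that $({-}) \smsh X$ preserves pushouts (and hence that $\Sigma(Y \smsh X) \simeq \Sigma Y \smsh X$) is the same implicit input the paper uses when it ``distributes $\vee$ over $\Sigma$ and $\smsh$'', so this is not an additional gap.
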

\begin{proof}
Let $C$ denote the cofibre of $f$ so that $\Sigma B \simeq \Sigma A \vee \Sigma C$.
We refer to the following diagram.
\[\begin{tikzcd}[ampersand replacement=\&,cramped]
	\& 1 \& {\Sigma X} \\
	1 \& {\Sigma A \vee \Sigma(A \smsh X)} \& {\Sigma(A \times X)} \\
	{\Sigma C \vee \Sigma(C \smsh X)} \& {\Sigma B \vee \Sigma(B \smsh X)} \& {\Sigma(B \times X)}
	\arrow[from=1-2, to=2-2]
	\arrow[from=1-2, to=1-3]
	\arrow[from=1-3, to=2-3]
	\arrow[from=2-2, to=2-3]
	\arrow[from=2-2, to=3-2]
	\arrow[from=3-2, to=3-3]
	\arrow[from=2-3, to=3-3]
	\arrow[from=3-1, to=3-2]
	\arrow[from=2-1, to=3-1]
	\arrow[from=2-1, to=2-2]
\end{tikzcd}\]
The right outer and top squares are pushouts by \cref{suspension-product}.
By reverse pushout pasting so is the bottom right square.
The bottom left square is a pushout by distributing $\vee$ over $\Sigma$ and $\smsh$
and using $\Sigma(B \smsh X) \simeq \Sigma B \smsh X$.
The bottom outer square gives the desired result.
\end{proof}
\begin{lemma}\label{pushout-product-splits}
Let $\mathcal E$ be an $\infty$-category with finite products and pushouts.
Let $f : A \to B$ and $g : X \to Y$ be maps of pointed objects.
Suppose that $\Sigma f$ and $\Sigma g$ split. Then so does
$\Sigma(f \widehat \times g)$.
\end{lemma}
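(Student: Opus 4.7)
The plan is to reduce $\Sigma(f \pprod g)$ to the smash version of the pushout product, where the splitting becomes transparent, and then apply the splittings of $\Sigma f$ and $\Sigma g$ via smash-and-pushout manipulations.

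First I would apply \cref{suspension-product} at each of the four corners of the defining pushout square for $P = (A \times Y) \sqcup^{A \times X}(B \times X)$. Using naturality of the decomposition $\Sigma(U \times V) \simeq \Sigma U \vee \Sigma V \vee \Sigma(U \smsh V)$ together with the fact that wedges distribute over the relevant pushouts, one obtains $\Sigma P \simeq \Sigma B \vee \Sigma Y \vee \Sigma P'$ and $\Sigma(B \times Y) \simeq \Sigma B \vee \Sigma Y \vee \Sigma(B \smsh Y)$, where $P' = (A \smsh Y) \sqcup^{A \smsh X}(B \smsh X)$ is the smash analogue of $P$. The map $\Sigma(f \pprod g)$ correspondingly decomposes as $\id_{\Sigma B} \vee \id_{\Sigma Y} \vee \Sigma h$, where $h : P' \to B \smsh Y$ is the smash pushout product of $f$ and $g$. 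Since a wedge of splittings is a splitting (as one checks by pushout pasting, using $\vee = \sqcup^1$), it suffices to show $\Sigma h$ splits.

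For the smash case, note $\Sigma h \simeq (\Sigma f) \mathbin{\hat{\smsh}} g$ since $\Sigma$ and $\smsh$ commute with each other and with the pushout defining $P'$. Using the splitting $\Sigma B \simeq \Sigma A \vee \Sigma C_f$ (where $C_f$ is the cofibre of $f$) and distributivity of $\smsh$ over $\vee$, the defining pushout simplifies to $(\Sigma A \smsh Y) \vee (\Sigma C_f \smsh X)$, and the map to $\Sigma B \smsh Y \simeq (\Sigma A \smsh Y) \vee (\Sigma C_f \smsh Y)$ takes the form $\id \vee (\Sigma C_f \smsh g)$, visibly a pushout of $\Sigma C_f \smsh g$ along a wedge inclusion. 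Finally, $\Sigma C_f \smsh g \simeq C_f \smsh \Sigma g$; smashing the splitting pushout of $\Sigma g$ with $C_f$ exhibits this map as a pushout of the basepoint inclusion $1 \to C_f \smsh \Sigma C_g \simeq \Sigma(C_f \smsh C_g)$. By pushout pasting $\Sigma h$ is itself a pushout of the basepoint inclusion, and so splits.

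The hard part will be the first reduction: carefully verifying that the suspension-product decomposition at the four corners really does induce the claimed decomposition of $\Sigma(f \pprod g)$ as $\id_{\Sigma B} \vee \id_{\Sigma Y} \vee \Sigma h$. This depends on tracking the natural identifications from \cref{suspension-product} and pushout-pasting them through, using in particular that the pushout of $\Sigma A \xleftarrow{\id} \Sigma A \xrightarrow{\Sigma f} \Sigma B$ is $\Sigma B$ (and similarly for the $\Sigma Y$ summand). Once this reduction is in place, the remaining steps reduce to routine applications of smash-distributes-over-wedge and pushout pasting.
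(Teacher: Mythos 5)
Your argument is correct and arrives at the same cofibre as the paper --- your $\Sigma(C_f \smsh C_g)$ is the paper's $\Sigma(C \smsh Z)$ --- but it is organised along a genuinely different route. The paper does not decompose all four corners of the pushout-product square. Instead it applies \cref{times-splits} twice, to the two vertical maps $\Sigma(A \times X) \to \Sigma(A \times Y)$ and $\Sigma(B \times X) \to \Sigma(B \times Y)$, obtaining compatible cocartesian squares exhibiting each as a pushout of $1 \to \Sigma Z \vee \Sigma(A \smsh Z)$, respectively $1 \to \Sigma Z \vee \Sigma(B \smsh Z)$; reverse pushout pasting then shows that $\Sigma(f \pprod g)$ is a pushout of the induced map $\Sigma Z \vee \Sigma(A \smsh Z) \to \Sigma Z \vee \Sigma(B \smsh Z)$, which by distributing $\smsh$ over $\vee$ (using $\Sigma B \simeq \Sigma A \vee \Sigma C$) is a pushout of $1 \to \Sigma(C \smsh Z)$. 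Your route instead applies \cref{suspension-product} at all four corners to split $\Sigma(f \pprod g)$ as $\id_{\Sigma B} \vee \id_{\Sigma Y} \vee \Sigma h$, with $h$ the smash pushout product, and then splits $\Sigma h$ using both hypotheses. This buys a statement of independent interest --- the stable decomposition of the pushout product into two identity summands plus the smash pushout product --- and treats $f$ and $g$ more symmetrically. The price is exactly the step you flag as the hard part: you must track the naturality of the three-fold decomposition in both variables simultaneously across a commutative square, and check that the cogap map of a wedge of squares is the wedge of the cogaps (true, by commutation of colimits, but it is the bulkiest piece of bookkeeping in either proof). The paper's detour through \cref{times-splits} packages the required naturality one variable at a time, so that each remaining step is a single pushout pasting. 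Both proofs are sound.
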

Note that any map from $1$ splits, so in particular any
fat wedge inclusion $\fw(X_1 \ldots X_n) \to X_1 \times \cdots \times X_n$ splits after suspension.
\begin{proof}
Let $C$ be the cofibre of $f$ and $Z$ the cofibre of $g$.
We use \cref{times-splits} twice
and refer to the following diagram in which each square is cocartesian.
\[\begin{tikzcd}[ampersand replacement=\&,cramped]
	\& {} \\
	\& 1 \& {\Sigma(A \times X)} \& {\Sigma(B \times X)} \\
	1 \& {\Sigma Z \vee \Sigma (A \smsh Z)} \& {\Sigma(A \times Y)} \& \bullet \\
	{\Sigma(C \smsh Z)} \& {\Sigma Z \vee \Sigma(B \smsh Z)} \&\& {\Sigma(B \times Y)}
	\arrow["{\Sigma(f \widehat \times g)}", from=3-4, to=4-4]
	\arrow[from=3-3, to=3-4]
	\arrow[from=2-4, to=3-4]
	\arrow[from=2-3, to=3-3]
	\arrow[from=2-3, to=2-4]
	\arrow[from=2-2, to=3-2]
	\arrow[from=2-2, to=2-3]
	\arrow[from=3-2, to=3-3]
	\arrow[from=4-2, to=4-4]
	\arrow[from=3-2, to=4-2]
	\arrow[from=4-1, to=4-2]
	\arrow[from=3-1, to=4-1]
	\arrow[from=3-1, to=3-2]
\end{tikzcd}\]
The leftmost bottom square is obtained by distributing $\smsh$ over $\vee$.
The outer bottom square gives the desired result.
\end{proof}
\begin{lemma}\label{suspension-disj}
Let $\mathcal E$ be an $\infty$-category with pushouts and a terminal object.
Let $I$ be an object and $A$ a family of pointed objects over $I$.
Then the suspension of $\bigsqcup_i A(i)$ is a wedge of suspensions, naturally
in $A$:
\[
	\Sigma\bigsqcup_i A(i) \simeq \Sigma I \vee \Sigma\bigvee_i A(i).
\]
\end{lemma}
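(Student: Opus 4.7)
The plan is to apply $\Sigma$ to the defining pushout of the wedge, exploit the retraction inherited from the family structure, and recognise $\Sigma \bigsqcup_i A(i)$ as the cofibre of a null map. Writing $T = \bigsqcup_i A(i)$ and $W = \bigvee_i A(i)$ for brevity, the defining pushout
\[
\begin{tikzcd}
I \arrow[r, "s"] \arrow[d] & T \arrow[d, "q"] \\
1 \arrow[r] & W
\end{tikzcd}
\]
has extra structure: the section $s$ is retracted by the projection $p : T \to I$, since $A$ is a family of pointed objects.

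Pasting this pushout with the unique map $T \to 1$ on the right, and using that the outer pushout of $1 \leftarrow I \to 1$ is $\Sigma I$, gives (by the reverse direction of pushout pasting) the cocartesian square
\[
\begin{tikzcd}
T \arrow[r] \arrow[d, "q"] & 1 \arrow[d] \\
W \arrow[r, "\delta"] & \Sigma I
\end{tikzcd}
\]
exhibiting $\Sigma I$ as the cofibre of $q$, with connecting map $\delta : W \to \Sigma I$. Pasting once more below by the same procedure produces the further cocartesian square
\[
\begin{tikzcd}
W \arrow[r, "\delta"] \arrow[d] & \Sigma I \arrow[d, "\Sigma s"] \\
1 \arrow[r] & \Sigma T
\end{tikzcd}
\]
in which $\Sigma T$ appears as the cofibre of $\delta$; the right vertical map is $\Sigma s$ by naturality of the pushout inclusions through the pasting. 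Commutativity of this square forces $\Sigma s \circ \delta \simeq 0$, and since $\Sigma p \circ \Sigma s = \mathrm{id}_{\Sigma I}$, we conclude
\[
\delta \simeq \Sigma p \circ \Sigma s \circ \delta \simeq \Sigma p \circ 0 \simeq 0,
\]
so $\delta$ is null.

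Finally, the cofibre of a null map $f : A \to B$ is $B \vee \Sigma A$: factoring $f$ through $1$ and using pushout pasting, $B \sqcup^A 1 \simeq B \sqcup^1 (1 \sqcup^A 1) = B \vee \Sigma A$. Applied to the null-homotopic $\delta : W \to \Sigma I$, this yields $\Sigma T \simeq \Sigma I \vee \Sigma W$, which is the claim. The main subtlety to verify is the identification of the right vertical map in the second derived pushout as $\Sigma s$; this requires tracking how the two pushout inclusions of $\Sigma I$ and $\Sigma T$ propagate through the iterated pasting, but follows from naturality once the outer pushout is recognised as the canonical presentation $\Sigma T = 1 \sqcup^T 1$.
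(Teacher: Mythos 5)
Your proof is correct, and it takes a genuinely different route from the paper's. The paper's proof is a single grid of cocartesian squares: it uses the retraction $p \colon \bigsqcup_i A(i) \to I$ of the section $s$ (via reverse pasting against the wedge square) to show that the square with corners $\bigsqcup_i A(i)$, $I$, $\bigvee_i A(i)$, $1$ is cocartesian, so that the cofibre map $\delta \colon \bigvee_i A(i) \to \Sigma I$ appears in the grid already factored through $1$; the splitting is then read off from one further reverse pasting against the suspension square of $\bigvee_i A(i)$, with no need to identify any induced map and no appeal to the general fact that the cofibre of a null map splits. Writing $T$ and $W$ as in your proof, you establish the same two intermediate facts --- $\Sigma I \simeq W \sqcup^T 1$ and $\Sigma T \simeq 1 \sqcup^W \Sigma I$ --- but then prove $\delta$ null a posteriori using the retraction $\Sigma p$ of $\Sigma s$, which requires identifying the induced map $r \colon \Sigma I \to \Sigma T$ with $\Sigma s$, the point you rightly flag as the delicate one. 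It does hold: $r$ is induced on pushouts by the span map $({!}_W, \mathrm{id}_T, \mathrm{id}_1) \colon (W \leftarrow T \to 1) \to (1 \leftarrow T \to 1)$, while the equivalence $\Sigma I \simeq W \sqcup^T 1$ is the cogap map induced by $(w_0, s, \mathrm{id}_1) \colon (1 \leftarrow I \to 1) \to (W \leftarrow T \to 1)$, where $w_0 \colon 1 \to W$ is the bottom map of the wedge square; the composite span map is $(\mathrm{id}, s, \mathrm{id})$, whose induced map is $\Sigma s$ by definition. (For your argument it would even suffice that $r$ admit \emph{some} retraction.) So your proof is complete; what the paper's arrangement buys is that the null-homotopy of $\delta$ is visible in the pasting diagram itself, eliminating this bookkeeping, while your version isolates the reusable facts that $\Sigma I$ is the cofibre of $T \to W$ and that the cofibre of a null map is a wedge with a suspension.
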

Phrased less idiosyncratically, this means that if $X \to Y \to Z$ is a cofibre sequence
where the map $X \to Y$ has a retraction, then $\Sigma Y \simeq \Sigma X \vee \Sigma Z$.
\begin{proof}
We refer to the following diagram in which every square is cocartesian.
The bottom right square gives the desired result.
\[\begin{tikzcd}[ampersand replacement=\&,cramped]
	I \& {\bigsqcup_i A_i} \& I \& 1 \\
	1 \& {\bigvee_i A_i} \& 1 \& {\Sigma I} \\
	\& 1 \& {\Sigma \bigvee_i A_i} \& {\Sigma \bigsqcup_i A_i}
	\arrow[from=1-1, to=1-2]
	\arrow[from=1-2, to=1-3]
	\arrow[from=1-3, to=1-4]
	\arrow[from=1-4, to=2-4]
	\arrow[from=2-4, to=3-4]
	\arrow[from=2-3, to=3-3]
	\arrow[from=1-3, to=2-3]
	\arrow[from=1-2, to=2-2]
	\arrow[from=1-1, to=2-1]
	\arrow[from=2-1, to=2-2]
	\arrow[from=2-2, to=3-2]
	\arrow[from=3-2, to=3-3]
	\arrow[from=3-3, to=3-4]
	\arrow[from=2-3, to=2-4]
	\arrow[from=2-2, to=2-3]
\end{tikzcd}\]
\end{proof}
\begin{lemma}\label{sigma-wedge-commute}
Let $\mathcal E$ be an $\infty$-category with pushouts and a terminal object.
Let $I$ be an object and $A$ a family of pointed objects over $I$.
Then we have
\[
	\Sigma \bigvee_i A(i) \simeq \bigvee_i \Sigma A(i).
\]
\end{lemma}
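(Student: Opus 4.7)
The plan is to exhibit both $\Sigma\bigvee_i A(i)$ and $\bigvee_i \Sigma A(i)$ as the same pushout $1 \sqcup^A I$, where $A \coloneqq \bigsqcup_i A(i)$, the map $A \to I$ is the structure projection $\pi$, and the map $A \to 1$ is the unique one. The equivalence, along with its naturality in $A$, is then immediate from comparing universal properties.

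To identify $\bigvee_i \Sigma A(i)$: the object $\bigsqcup_i \Sigma A(i)$ is the relative suspension $\Sigma_I A = I \sqcup^A I$, given as the pushout of $I \xleftarrow{\pi} A \xrightarrow{\pi} I$. Pasting this vertically under the defining pushout $\bigvee_i \Sigma A(i) = 1 \sqcup^I \bigsqcup_i \Sigma A(i)$ yields a cocartesian square
\[
\begin{tikzcd}
A \arrow[r, "\pi"] \arrow[d] & I \arrow[d] \\
1 \arrow[r] & \bigvee_i \Sigma A(i).
\end{tikzcd}
\]

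To identify $\Sigma \bigvee_i A(i)$: let $e : I \to A$ be the section defining $A$ as a family of pointed objects, so $\pi \circ e = \id$. I would present the diagram
\[
\begin{tikzcd}
I \arrow[r, "e"] \arrow[d] & A \arrow[d] \arrow[r, "\pi"] & I \arrow[d]\\
1 \arrow[r] & \bigvee_i A(i) \arrow[r] \arrow[d] & 1 \arrow[d]\\
& 1 \arrow[r] & \Sigma \bigvee_i A(i)
\end{tikzcd}
\]
in which every square is cocartesian: the top-left is the defining pushout of the wedge, the bottom-right is the defining pushout of the suspension, and the top-right is cocartesian because horizontally pasting it with the top-left produces the outer square whose top edge is $\pi \circ e = \id$, which is cocartesian. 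Vertically pasting the top-right and bottom-right squares then gives a cocartesian rectangle of the same shape as the first diagram, with corners $A, I, 1, \Sigma \bigvee_i A(i)$ and the same top and left maps $\pi$ and $A \to 1$. Hence $\Sigma \bigvee_i A(i) \simeq 1 \sqcup^A I$.

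Combining, both objects are canonically the same pushout with the same maps, so they are canonically equivalent. No essential obstacle arises: the argument is pure pushout pasting in the spirit of \cref{suspension-disj}, from which it essentially borrows its diagram. The only delicate bookkeeping is to check that the left vertical in the second identification, a priori the composite $A \to \bigvee_i A(i) \to 1$, is indeed the unique map $A \to 1$, which is automatic since $1$ is terminal.
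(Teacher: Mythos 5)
Your proof is correct and is essentially the paper's argument: both are pure pushout pasting, the paper packaging the relevant squares into a single cube (top face the relative suspension $I \sqcup^{\bigsqcup_i A(i)} I$, bottom face $1 \sqcup^{\bigvee_i A(i)} 1$, left and back faces your top-right square) and reading off the front face, whereas you flatten this into two planar pastings meeting at the common presentation $1 \sqcup^{\bigsqcup_i A(i)} I$. A small merit of your write-up is that it makes explicit why the square with edges $\bigsqcup_i A(i) \to I$ and $\bigvee_i A(i) \to 1$ is cocartesian (via $\pi \circ e = \id$ and reverse pasting), a point the paper's cube argument leaves unjustified.
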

Here $\Sigma A(i)$ refers to the suspension of $A$ in the slice $\mathcal E/I$.
\begin{proof}
Consider the following cube, induced by functoriality of pushouts.
\[\begin{tikzcd}[ampersand replacement=\&,cramped]
	{\bigsqcup_i A(i)} \&\&\& I \\
	\&\& I \&\&\& {\bigsqcup_i \Sigma A(i)} \\
	\\
	{\bigvee_i A(i)} \&\&\& 1 \\
	\&\& 1 \&\&\& {\Sigma \bigvee_i A(i)}
	\arrow[from=5-3, to=5-6]
	\arrow[from=4-1, to=5-3]
	\arrow[from=4-4, to=5-6]
	\arrow[from=1-4, to=2-6]
	\arrow[from=1-1, to=2-3]
	\arrow[from=1-1, to=4-1]
	\arrow[from=1-1, to=1-4]
	\arrow[from=2-6, to=5-6]
	\arrow[from=1-4, to=4-4]
	\arrow[from=4-1, to=4-4]
	\arrow[from=2-3, to=5-3, crossing over]
	\arrow[from=2-3, to=2-6, crossing over]
\end{tikzcd}\]
The top, bottom, left, and back faces are cocartesian. By pasting, so is the
front face. This gives the required result.
\end{proof}
\begin{lemma}\label{wedge-distributivity}
Let $\mathcal E$ be an $\infty$-category with products and pushouts. 
Let $A$ and $B$ be two pointed objects over $I$.
Then we have
\[
	\bigvee_i (A(i) \vee B(i)) \simeq \bigvee_i A(i) \vee \bigvee_i B(i).
\]
\end{lemma}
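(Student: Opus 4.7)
The plan is to show that both sides of the equivalence are pushouts of the same span, by unfolding each side as an iterated pushout and applying pushout pasting. This parallels the approach used in the preceding distributivity lemmas \cref{suspension-disj} and \cref{sigma-wedge-commute}.

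Write $\mathcal{A} = \bigsqcup_i A(i)$, $\mathcal{B} = \bigsqcup_i B(i)$, $W_A = \bigvee_i A(i) = 1 \sqcup^I \mathcal{A}$, and $W_B = 1 \sqcup^I \mathcal{B}$. I would first observe that the total space of the slice wedge $A(i) \vee B(i)$, computed as a pushout in $\mathcal{E}/I$ and then forgotten to $\mathcal{E}$, is $\mathcal{A} \sqcup^I \mathcal{B}$, where the two maps out of $I$ are the sections. Consequently $\bigvee_i(A(i) \vee B(i)) \simeq 1 \sqcup^I (\mathcal{A} \sqcup^I \mathcal{B})$, with the structure map $I \to \mathcal{A} \sqcup^I \mathcal{B}$ being the common composite via $\mathcal{A}$ or $\mathcal{B}$.

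I would then exhibit both sides as the pushout of the span $W_A \leftarrow I \to \mathcal{B}$, with left leg the basepoint $I \to 1 \to W_A$. For the right-hand side, stacking the defining pushout of $W_B$ above the defining pushout of $W_A \sqcup^1 W_B$ yields a $3 \times 2$ rectangle whose two constituent squares are cocartesian; by pushout pasting the outer rectangle is a pushout of $W_A \leftarrow I \to \mathcal{B}$, exhibiting $W_A \vee W_B$ as the desired pushout. For the left-hand side, I would consider the intermediate object $W_A \sqcup^{\mathcal{A}} (\mathcal{A} \sqcup^I \mathcal{B})$ and paste twice. Stacking the defining pushout of $W_A$ next to this intermediate pushout (over the inclusion $\mathcal{A} \to \mathcal{A} \sqcup^I \mathcal{B}$) gives the outer span $1 \leftarrow I \to \mathcal{A} \sqcup^I \mathcal{B}$, identifying the intermediate with $\bigvee_i(A(i) \vee B(i))$. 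On the other hand, stacking the defining pushout of $\mathcal{A} \sqcup^I \mathcal{B}$ above the same intermediate pushout gives the outer span $W_A \leftarrow I \to \mathcal{B}$, where the left leg now factors through $I \to \mathcal{A} \to W_A$.

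The point requiring care is matching the two maps $I \to W_A$ that arise: one appears as the basepoint $I \to 1 \to W_A$, the other as $I \to \mathcal{A} \to W_A$. These coincide by the commutativity of the defining pushout square of $W_A$, so both pasting arguments describe the same pushout and the equivalence follows. Apart from this bookkeeping, the argument is purely a sequence of pushout-pasting applications with no hard content.
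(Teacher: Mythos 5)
Your proof is correct, but it takes a genuinely different route from the paper's. The paper's proof is a one-line adjunction argument: the functor $\bigvee_i : (\mathcal E/I)_\star \to \mathcal E_\star$ has a right adjoint $X \mapsto X \times I$, hence preserves all colimits, and in particular binary coproducts of pointed objects, which are binary wedges. Your argument instead unfolds everything into explicit pushouts and identifies both sides as the pushout of the single span $\bigvee_i A(i) \leftarrow I \to \bigsqcup_i B(i)$ by two applications of pushout pasting on each side; the one point needing care, that the two candidate maps $I \to \bigvee_i A(i)$ agree via the commutativity of the defining square of $\bigvee_i A(i)$, is handled correctly and is at the level of rigour the paper itself adopts for such homotopies. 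The trade-off: the paper's proof is shorter, identifies the conceptual reason (left adjoints preserve colimits), and immediately yields preservation of \emph{all} colimits, not just binary wedges; your proof is more elementary, uses only pushouts and the pasting lemma (it does not actually need the products in the hypothesis, which the paper's proof uses to form the right adjoint), and is stylistically closer to the diagrammatic proofs of \cref{suspension-disj} and \cref{sigma-wedge-commute}. Either proof would serve.
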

\begin{proof}
The wedge $\bigvee_i$ from $(\mathcal E/I)_\star$ to $\mathcal E_\star$ has
a right adjoint, given by $A \mapsto A \times I$. So it preserves all colimits.
In particular it preserves binary coproducts, i.e.\ binary wedge.
\end{proof}

We are now ready to prove the main goal, that the map \eqref{disj-fw} splits
after suspension. By \cref{suspension-disj}, including the naturality statement,
its suspension is
\[
	\Sigma U_n \vee \Sigma \bigvee_{(i_1\ldots i_n) \in U_n} \fw(\Omega A(i_1)\ldots\Omega A(i_n)) \to 
				\Sigma U_n \vee \Sigma \bigvee_{(i_1\ldots i_n) \in U_n} (\Omega(A(i_1) \times \cdots \times \Omega(A(i_n))))
\]
By pushout pasting, we can ignore the first component, $\Sigma U_n$.
We can also commute $\Sigma$ and $\bigvee_i$ by \cref{sigma-wedge-commute}.
By a relative version of \cref{pushout-product-splits}, in the slice $\mathcal E/U_n$, 
   the resulting map looks like
   \[
   \bigvee_{(i_1\ldots i_n) \in U_n} \Sigma \fw(\Omega A(i_1) \ldots \Omega A(i_n)) \to
   \bigvee_{(i_1\ldots i_n) \in U_n} (\Sigma \fw(\Omega A(i_1) \ldots \Omega A(i_n)) \vee 
			   \Sigma \Omega A(i_1) \smsh \cdots \smsh \Omega A(i_n) )
  \]
Finally, by \cref{wedge-distributivity}, this is a pushout of
	\[ 1 \to 
		\bigvee_{(i_1\ldots i_n) \in U_n} \Sigma \Omega A(i_1) \smsh \cdots \smsh \Omega A(i_n).
	\]
Since $\Sigma$ preserves pushout squares, we also have that the suspension of $Q_{2n-2} \to Q_{2n}$
splits.
Thus we arrive at the following splitting for $\Sigma \Omega \bigvee_i A(i)$.

\begin{theorem}\label{loop-wedge-splitting}
Let $\mathcal E$ be an $\infty$-category with finite limits, colimits, and sequential colimits where
pushouts are universal and satisfy descent, and sequential colimits commute with pullbacks.
Let $I$ be a 0-truncated object of $\mathcal E$ with complemented diagonal,
and let $A$ be a pointed object over $I$.
Then we have the following stable splitting for the loop space of the wedge of $A$:
\[
	\Sigma \Omega \bigvee_{i \in I} A(i)
	\simeq \Sigma \bigvee_{n \in \N} \bigvee_{(i_1\ldots i_n) \in U_n} \Omega A(i_1) \smsh \cdots \smsh \Omega A(i_n).
\]
\end{theorem}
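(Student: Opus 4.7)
The plan is to apply the zigzag construction, starting from the span $1 \leftarrow 0 \to 0$ over $1 \leftarrow I \to \bigsqcup_{i\in I} A(i)$ defining the wedge, and then suspend the resulting filtration of $\Omega \bigvee_i A(i)$. The discussion immediately preceding the theorem has already done the main technical work: each step $Q_{2n-2} \to Q_{2n}$ is a pushout of the relative fat wedge map \eqref{disj-fw}, and its suspension is shown to be a pushout of the basepoint inclusion $1 \to \Sigma W_n$, where
\[
	W_n \coloneqq \bigvee_{(i_1 \ldots i_n) \in U_n} \Omega A(i_1) \smsh \cdots \smsh \Omega A(i_n).
\]
What remains is to assemble these stagewise splittings into the claimed formula.

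First I would note that $\Sigma(-) = 1 \sqcup^{(-)} 1$ is defined as a pushout, hence preserves all colimits of pointed objects; in particular it commutes with the sequential colimit computing $\Omega \bigvee_i A(i) \simeq Q_\infty$, giving $\Sigma \Omega \bigvee_i A(i) \simeq \colim_n \Sigma Q_{2n}$. Next, I would prove by induction on $n$ that $\Sigma Q_{2n} \simeq \bigvee_{m=1}^{n} \Sigma W_m$. The base case $\Sigma Q_0 = \Sigma 1 = 1$ is immediate. For the inductive step, the fact that $\Sigma Q_{2n-2} \to \Sigma Q_{2n}$ is a pushout of $1 \to \Sigma W_n$ along the basepoint of $\Sigma Q_{2n-2}$ means precisely that $\Sigma Q_{2n} \simeq \Sigma Q_{2n-2} \vee \Sigma W_n$, by the definition of the wedge as a pushout.

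Taking the sequential colimit, the desired formula will follow from the fact that an infinite wedge is the sequential colimit of its finite subwedges along the canonical inclusions, a standard pushout-pasting argument in a telescope diagram. Combined with the fact that $\Sigma$ commutes with wedges, this yields
\[
\colim_n \bigvee_{m=1}^n \Sigma W_m \simeq \bigvee_{n \in \N} \Sigma W_n \simeq \Sigma \bigvee_{n \in \N} W_n,
\]
which is the claimed stable splitting.

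The main obstacle is bookkeeping rather than conceptual: one must verify that the splittings at consecutive stages are compatible so that the cofibres really assemble into the \emph{wedge} $\bigvee_{m \le n} \Sigma W_m$ rather than some twisted combination. Compatibility is supplied by the naturality of the splittings used to derive \eqref{disj-fw} (in particular \cref{suspension-disj,sigma-wedge-commute,wedge-distributivity}): at each stage the new cofibre $\Sigma W_n$ attaches along the basepoint of $\Sigma Q_{2n-2}$, so successive pushouts genuinely append new wedge summands without mixing them with earlier ones.
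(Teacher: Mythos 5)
Your proposal is correct and follows essentially the same route as the paper: both rely on the preceding discussion to establish that each $\Sigma Q_{2n-2} \to \Sigma Q_{2n}$ splits with cofibre $\Sigma\bigvee_{(i_1\ldots i_n)\in U_n}\Omega A(i_1)\smsh\cdots\smsh\Omega A(i_n)$, commute $\Sigma$ with the sequential colimit, and assemble the stagewise splittings into the infinite wedge. Your explicit induction and the remark on compatibility of the splittings merely spell out details the paper leaves implicit.
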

Recall that $U_n$ denotes the subobject of $I^n$ consisting of lists with no two adjacent elements equal.
\begin{proof}
Consider the zigzag construction as discussed above.
We have $\Omega \bigvee_{i \in I} A(i) \simeq \colim_n Q_{2n}$ and since suspension commutes with sequential colimits,
   $\Sigma \Omega \bigvee_{i \in I} A(i) \simeq \colim_n \Sigma Q_{2n}$
   The statement follows from the fact that each map in this sequence, $\Sigma Q_{2n-2} \to \Sigma Q_{2n}$, splits
   with the specified cofibre.
\end{proof}
\cref{loop-wedge-splitting} can be compared with the classical fact that, if
$G$ is an $I$-indexed family of groups, then each element of the free product
$\bigast_i G_i$ has a unique representation given by a number $n \in \N$, a
sequence of indices $(i_1,\cdots,i_n) \in U_n$, and a sequence $a_1\ldots a_n$
with $a_j \in G_{i_j}$ non-trivial.

\section{Outlook}\label{conclusion}
The zigzag construction suggests several directions for future research.
On the one hand one might look for direct applications. We have presented some
applications, but expect there to be many others.
It is natural to ask when we get stable splitting in the zigzag construction,
and relatedly one might look for a description of (co)homology.
The fact that the zigzag construction describes path spaces of pushouts
as sequential colimits of \emph{pushouts} suggests that it could be iterated,
to obtain results about iterated loop spaces of pushouts.
This would perhaps be especially interesting in homotopy type theory,
where not many qualitative results about higher identity types of pushouts are known.

On the other hand one might look for generalisations. Classically, in spaces,
models are known for $\Omega^n \Sigma^n X$~\cite{milgram,gils}. It remains
to be understood if such models can also be described in homotopy type
theory, or something like the elementary setting of this paper. Relatedly,
one might wonder if it is possible to explain higher-dimensional versions of
the Blakers--Massey theorem using a generalisation of the zigzag
construction. Yet another direction for generalisation would be to look for
descriptions of pushouts of $\infty$-categories as opposed to
$\infty$-groupoids.

Any argument presented in homotopy type theory suggests the possibility of
formalisation in a proof assistant. 
A version of the zigzag construction has been formalised and
proven correct by Vojtěch Štěpančík in \texttt{agda-unimath}~\cite{agda-unimath},
but most of the results in this paper remain unformalised at the moment.
\printbibliography
\end{document}